\DeclarePairedDelimiterX{\cond}[1]{[}{]}{\setargs{#1}}
\DeclarePairedDelimiterX{\set}[1]{\{}{\}}{\setargs{#1}}
\DeclarePairedDelimiterX{\sparenv}[1]{\lbrack}{\rbrack}{\setargs{#1}}
\NewDocumentCommand{\setargs}{>{\SplitArgument{1}{;}}m}
{\setargsaux#1}
\NewDocumentCommand{\setargsaux}{mm}
{\IfNoValueTF{#2}{#1} {#1\,\delimsize|\,\mathopen{}#2}}
\DeclarePairedDelimiter\abs{\lvert}{\rvert}
\DeclarePairedDelimiter\ceil{\lceil}{\rceil}
\DeclarePairedDelimiter\floor{\lfloor}{\rfloor}
\DeclarePairedDelimiter\parenv{\lparen}{\rparen}
\newcommand{\cA}{\mathcal{A}}
\newcommand{\cC}{\mathcal{C}}
\newcommand{\cQ}{\mathcal{Q}}
\newcommand{\oc}{\overline{c}}
\newcommand{\bc}{\mathbf{c}}
\newcommand{\bv}{\mathbf{v}}
\newcommand{\ov}{\overline{v}}
\newcommand{\bu}{\mathbf{u}}
\newcommand{\ou}{\overline{u}}
\newcommand{\ow}{\overline{w}}
\newcommand{\ozero}{\overline{0}}
\renewcommand{\leq}{\leqslant}
\renewcommand{\geq}{\geqslant}
\newdefinition{definition}{Definition}
\newdefinition{remark}{Remark}
\newdefinition{question}{Problem}
\newtheorem{theorem}{Theorem}
\newtheorem{proposition}{Proposition}
\newtheorem{corollary}{Corollary}
\newtheorem{lemma}{Lemma}
\newproof{proof}{Proof}
\newcommand{\F}{\mathbb{F}}
\newcommand{\N}{\mathbb{N}}
\newcommand{\Prob}{\mathbb{P}}
\newcommand{\Zero}{{\mathbf{0}}}
\DeclareMathOperator{\wt}{wt}
\DeclareMathOperator{\supp}{supp}
\DeclareMathOperator{\E}{E}
\newcommand{\ind}{\mathbb{I}}
\newcommand{\tZ}{\widetilde{Z}}
\newcommand{\eqdef}{\triangleq}
\newenvironment{bsmatrix}{\left[\begin{smallmatrix}}{\end{smallmatrix}\right]}
\newcommand{\szu}{\mathsf{ZU}}
\newcommand{\szc}{\mathsf{ZC}}
\newcommand{\snu}{\mathsf{NU}}
\newcommand{\snc}{\mathsf{NC}}
\begin{document}

\title{The Second-Order Football-Pool Problem and the Optimal Rate of Generalized-Covering Codes\tnoteref{t1}}
\tnotetext[t1]{This work was supported in part by the German Israeli Project Cooperation (DIP) under Grant PE2398/1-1.}
\author[1]{Dor Elimelech\corref{cor1}}
\ead{doreli@post.bgu.ac.il}

\author[1]{Moshe Schwartz}
\ead{schwartz@ee.bgu.ac.il}

\cortext[cor1]{Corresponding author}
\address[1]{School of Electrical and Computer Engineering,
  Ben-Gurion University of the Negev,\\
  Beer Sheva 8410501, Israel}

\begin{abstract}

  The goal of the classic football-pool problem is to determine how many lottery tickets are to be bought in order to guarantee at least $n-r$ correct guesses out of a sequence of $n$ games played. We study a generalized (second-order) version of this problem, in which any of these $n$ games consists of two sub-games. The second-order version of the football-pool problem is formulated using the notion of generalized-covering radius, recently proposed as a fundamental property of linear codes. We consider an extension of this property to general (not necessarily linear) codes, and provide an asymptotic solution to our problem by finding the optimal rate function of second-order covering codes given a fixed normalized covering radius. We also prove that the fraction of second-order covering codes among codes of sufficiently large rate tends to $1$ as the code length tends to $\infty$.
\end{abstract}

\begin{keyword}
  Football-Pool Problem, Generalized Covering Radius, Covering Codes
\end{keyword}
\maketitle

\section{Introduction}
The covering problem is a fundamental problem in metric spaces: given a non-negative number $r$, find a set of points in the space that is of minimal size, such that the balls of radius $r$ centered those points cover the entire space. Such sets, often referred to as covering codes, have been thoroughly studied due to their fascinating relations with various topics in pure and applied mathematics, such as finite fields, discrete geometry, linear algebra, communication and algorithms. We refer to the excellent book~\cite{Cohen} for further reading on covering codes and their applications. 

The covering problem in Hamming spaces is frequently referred to as the football-pool problem (e.g., see~\cite{kamps1967football,linderoth2009improving,van1989new,wille1987football,ostergaard1994new,hamalainen1995football}), a name derived from a lottery-type gamble in which the outcomes of a sequence of football games are guessed. The football-pool problem deals with the following question: what is the minimal number of lottery tickets to be bought in order to guarantee that at least one of the tickets wins, where $n$ football games are played, and a ticket with at least $n-r$ games guessed correctly wins. The answer to that question is the minimal size of a covering code of length $n$ with covering radius at most $r$ in the Hamming space over an alphabet of $q$ elements (where $q$ is the number possible outcomes in a single football game).

We consider a generalization of the football-pool problem. Assume that $n$ football games are played, but now, each game consists of two sub-games, a match and a rematch, each with $q$ possible outcomes. A gambler buys lottery tickets with $n$ guesses, (each from the $q$ possible outcomes of a single match). The gambler is considered to guess correctly the $\ell$ games $i_1,\dots,i_{\ell}$ if they have two tickets, where the first ticket  guesses correctly the first matches in the games $i_1,\dots,i_{\ell}$ and the second ticket guesses correctly the the rematches of the same games $i_1,\dots,i_{\ell}$ (the same ticket may be used twice). The gambler wins if they can guess correctly at least $n-r$ games. The goal in our generalized (second-order) football-pool problem is to determine what is the minimal number of tickets to be bought in order to guarantee winning.

Similarly to the original football-pool problem, this minimal number of tickets would be the minimal size of a \emph{second-order covering code} of length $n$ with \emph{second covering radius} at most $r$ in the Hamming space over an alphabet of size $q$. Motivated by this generalized version of the football-pool problem, we study generalized covering codes in Hamming spaces.

The generalized covering radius was recently introduced as a fundamental property of linear codes, shown to characterize a trade-off between access-complexity, storage and latency in linear data-querying protocols. In~\cite{elimelech2021generalized}, the case of linear codes was studied: some fundamental properties of the generalized covering radii were examined, and asymptotic bounds on the optimal rates of linear covering codes were derived. An interesting relation between the generalized covering radius and generalized Hamming weights of linear codes (see~\cite{1991-Wei}) was also observed.  In another paper~\cite{elimelech2022generalized}, the generalized covering radii of Reed-Muller codes were examined. 

In this work, we focus on the generalized covering radii of general codes, i.e., codes which are not necessarily linear. The main result in the paper is the derivation of the exact value of the minimal asymptotic rate of second-order covering codes with a fixed normalized second covering radius over an arbitrary finite alphabet. For a normalized radius  $\rho\in [0,1]$,  denoting the second-order optimal rate  function over an alphabet of size $q$ by $\kappa_2(\rho,q)$, we prove in Theorem~\ref{th:BinRatecovering} that 
\[\kappa_2(\rho,q)=\begin{cases}
1-H_{q^2}(\rho) & \rho\in [0, 1-\frac{1}{q^2} ),\\
0 & \rho\in [1-\frac{1}{q^2},1],
\end{cases}\]
where $H_{q^2}(\cdot)$ denotes the $q^2$-ary entropy function. This result is an improvement upon the best known upper bound on the minimal asymptotic rate of linear binary second-order covering codes, given in \cite[Theorem 22]{elimelech2021generalized}. Thus, while a gap still remains for linear codes, our main result for general codes completely finds $\kappa_2(\rho,q)$, while also extending to general finite alphabets.

Another important result in this paper is given in   Theorem~\ref{th:FractionCovering}, where we prove that second-order covering codes are very common among codes of sufficiently large rate. For $\rho\in [0,1-\frac{1}{q^2})$ let  $\alpha_q(n,\rho,M)$ denote the fraction of codes of length $n$ over an alphabet of size $q$ with normalized second covering radius at most $\rho$ in the set of $\parenv*{n,M}_q$ codes. In Theorem~\ref{th:FractionCovering} we prove that for any $\varepsilon>0$
\[ \lim_{n\to\infty}\alpha_q\parenv*{n,\rho,q^{n(1-H_{q^2}(\rho)+\varepsilon)}}=1.\]

\section{Preliminaries}
\label{sec:Pre}
We consider codes over finite Abelian groups. We use  $G_q$ to denote an Abelian group of size $q\in \N$ and $+$ for the group operation. Naturally, $G_q^n$ denotes the set of vectors of length $n$ with entries from $G_q$, and $G_q^{t\times n}$ denotes the set of $t\times n$ matrices with entries from $G_q$. We also consider $G_q^n$ and $G_q^{t\times n}$ as Abelian groups with the entry-wise group operation. We use lower-case letters, $v$, to denote scalars and group elements. Overlined lower-case letters, $\ov$, shall be used to denote vectors, and bold lower-case letters, $\bv$, to denote matrices. 

For a vector $\ov=(v_1,\dots,v_n)\in G_q^n$, the support of $\ov$ is defined as
\[\supp(\ov)\eqdef \set{1\leq i\leq n ; v_i\neq 0},\]
and its Hamming weight is defined as
\[\wt(\ov)\eqdef \abs{\supp(\ov)}.\]
The Hamming distance between two vectors $\ov,\ov'\in G_q^n$ is then defined as
\[d(\ov,\ov')\eqdef\wt(\ov'-\ov).\]

 A set $C\subseteq G_q^n$ is called an $(n,M)_q$ code if it has cardinality $M$. The elements in a code $C$ shall also be called codewords. For an $(n,M)_q$ code, $\log_q(M)$ is called the dimension of the code. In the case where $G_q$ is as also a field, we say that $C$ is a linear code if it is a linear subspace of $G_q^n$ over $G_q$. In that case, $C$ is said to be an $[n,k]_q$ linear code,  where $k=\log_q(M)$  is its dimension (which is also the dimension of $C$ as a vector space). 

For an $(n,M)_q$ code $C$, the covering radius of $C$, denoted $R(C)$, is the distance of the farthest point in $G_q^n$ to the code, with respect to the Hamming distance. That is,
\[R(C)\eqdef \max_{\ov\in G_q^n}\min_{\oc\in C}d(\oc,\ov).\]
Equivalently, the covering radius of the code is the minimum radius at which balls centered at the codewords of $C$ cover the entire space $G_q^n$. Here, a ball of radius $r$ (not necessarily an integer) centered at $\ov\in G_q^n$ is defined as the set of vectors in $G_q^n$ that are at distance no more than $r$ from $\ov$, i.e.,
\[B_{r}(\ov)\eqdef \set*{\ou\in G_q^{n}  ; d(\ov,\ou)\leq r}.\]
The normalized covering radius of $C$ is denoted by $\rho(C)$, and is defined to be
\[ \rho(C)\eqdef\frac{R(C)}{n}.\]


The generalized covering radius was introduced in~\cite{elimelech2021generalized} as a fundamental property of linear codes. While~\cite{elimelech2021generalized} only studied linear codes, we extend our view to general codes, i.e., codes which are not necessarily linear. We begin by recalling the definition of the $t$-metric, also known as the block metric, on the space of matrices $G_q^{t\times n}$.

\begin{definition} 
Let $\bv\in G_q^{t\times n}$ be a matrix with rows denoted by $\ov_1,\dots,\ov_t$. The $t$-weight of $\bv$ is defined by
\[\wt^{(t)}(\bv)\eqdef\abs*{\bigcup_{i\in [t]}\supp{\ov_i}}.\]
The $t$-distance between two matrices $\bv$ and $\bu$ in $G_q^{t\times n}$   is defined to be
\[d^{(t)}(\bv,\bu)\eqdef \wt^{(t)}(\bu-\bv).\]
The $t$-Ball is defined in the usual manner, with respect to the $t$-metric:
\[B_{r}^{(t)}(\bv)\eqdef \set*{\bu\in G_q^{t\times n}  ; d^{(t)}(\bv,\bu)\leq r}.\]
\end{definition}

We remark that for $t=1$, we get the well known Hamming metric. Thus, notationally, when $t=1$ we may omit the superscript $^{(1)}$. Next we define the $t$-th power of a code.

\begin{definition}\label{def:codeVariations}
 Let $C$ be an $(n,M)_q$ code and $t\in\N$. We define $C^t\subseteq G_q^{t\times n}$  to be the set of $t\times n$ matrices over $G_q$ such that their rows are codewords in $C$. That is,
\[C^t \eqdef \set*{ \begin{bmatrix} \oc_1 \\ \vdots \\ \oc_t\end{bmatrix} \in 
G_q^{t\times n}; \forall i\in[t], \oc_i\in C}.\]
\end{definition}

We are now ready to define the $t$-th-covering radius of a code.
\begin{definition}
Let $C$ be an $(n,M)_q$ code and $t\in\N$. The $t$-th-covering radius of $C$ is defined to be the (regular) covering radius of $C^t$ inside $G_q^{t\times n}$ with respect to the $t$-metric. That is,
\[R_t(C)\eqdef\max_{\bu\in G_q^{t\times n}}\min_{\bc\in C^t}d^{(t)}(\bc,\bu).\]
\end{definition}

Once again, we note that for $t=1$, the $t$-th-covering radius of a code is the regular well known covering radius of the code (with respect to the Hamming metric).

\begin{remark} In~\cite{elimelech2021generalized}, it is proved that in the case where $G_q$ is a finite field and $C$ is a linear code, the $t$-th-covering radius has several equivalent definitions, showing an algebraic aspect of this property. However, in the general case, where such an algebraic structure is missing, it is unclear if an extension of these equivalent definitions exists.
\end{remark}

\begin{remark}\label{rem:Group}
The definition of the $t$-th-covering radius depends on the $t$-metric, which is defined using the group operation. However, it is easy to check that the $t$-metric is invariant to a change of the group operation. Thus, the $t$-th-covering radius may be considered as a property of codes over arbitrary finite alphabets (by considering a finite alphabet of size $q$ as a cyclic group of order $q$). Nevertheless, for convenience and simplification of notation, we think of all codes as codes over finite Abelian groups.
\end{remark}


The fundamental problem in any coverings-type setting is to find the minimal size of a set with a covering radius which is at most $r$. Thus, we are interested in the minimal size (or equivalently, dimension or rate) of a code $C\subseteq G_q^n$ such that $R_t(C)\leq r$. 

\begin{definition}\label{def:RateFunc}
Let $n,t,q \in \N$, and $0\leq r \leq n$. The optimal dimension function, denoted by $k_t(n,r,q)$, is the minimal dimension of a code of length $n$ over a group of size $q$ with $t$-th-covering radius at most $r$. Namely, 
\[ k_t(n,r,q)\eqdef \min\set*{\log_q \abs*{C}  ; C\subseteq G_q^n, R_t(C)\leq r}. \] 
For $\rho \in [0,1]$, the asymptotic optimal rate is then defined as
\[ \kappa_t(\rho,q) \eqdef \liminf_{n\to \infty} \frac{ k_t(n,\rho n,q)}{n}.\] 
\end{definition}

We remark that the group $G_q$ is omitted from the notation, as by Remark~\ref{rem:Group}, $k_t$ and $\kappa_t$ only depend on the size $q$. 


A restriction to linear codes of the above functions was studied in~\cite{elimelech2021generalized}. Similarly to the general case, if $G_q=\F_q$ is the finite field of size $q$, then $k_t^{\mathrm{Lin}}$ and $\kappa_t^{\mathrm{Lin}}$ are defined to be 
 \[ k_t^{\mathrm{Lin}}(n,r,q)\eqdef \min\set*{\log_q \abs*{C}  ; \substack{C\subseteq \F_q^n, R_t(C)\leq r\\ C \text{ is linear }}}, \] 
and 
 \[ \kappa_t^{\mathrm{Lin}}(\rho,q) \eqdef \liminf_{n\to \infty} \frac{ k_t^{\mathrm{Lin}}(n,\rho n,q)}{n}.\] 
 Obviously, for all $n,t,r,\rho$  and prime power $q$ we have 
 \[  k_t(n,r,q) \leq  k_t^{\mathrm{Lin}}(n,r,q) \quad \text{and} \quad  \kappa_t(\rho,q) \leq  \kappa_t^{{\mathrm{Lin}}}(\rho,q).\] 

It is well known~\cite{cohen1985good} that in the case of $t=1$,
\begin{equation}
    \label{eq:1orderSol}
    \kappa_1(\rho,q)=  \kappa_1^{{\mathrm{Lin}}}(\rho,q)=\begin{cases}
1-H_q(\rho) & \rho\in[0,1-\frac{1}{q}),\\
0 & \rho\in[1-\frac{1}{q},1],
\end{cases}
\end{equation}
where $H_q$ is the $q$-ary entropy function defined by
\[ H_q(x) \eqdef x\log_q (q-1) - x\log_q (x) - (1-x)\log_q (1-x),\]
and for continuity, $H_q(0)\eqdef 0$.

At this point, our knowledge of $\kappa_t(\rho,q)$ becomes severely limited, and we restrict ourselves to the first unresolved case, i.e., $t=2$. The lower bound from~\cite[Proposition 12]{elimelech2021generalized} gives us:
\begin{equation}
\label{eq:lowerball}
    \kappa_2(\rho,q) \geq \begin{cases}
    1-H_{q^2}(\rho) & \rho\in[0,1-\frac{1}{q^2}),\\
    0 & \rho\in[1-\frac{1}{q^2},1].
    \end{cases}
\end{equation}
This bound is based on a simple ball-covering argument. We also remark that while~\cite{elimelech2021generalized} only considered linear codes, the proof for the bound does not use the linearity of the code in any way, and thus the bound applies not only to $\kappa^{\mathrm{Lin}}_2(\rho,q)$, but also to $\kappa_2(\rho,q)$. In the other direction, \cite{elimelech2021generalized} only managed to handle the further restricted case of $q=2$, and thus~\cite[Proposition 14 and Theorem 22]{elimelech2021generalized} proved two upper bounds which give us:
\begin{align}
    \kappa_2(\rho,2)\leq\kappa^{\mathrm{Lin}}_2(\rho,2) & \leq 1-H_2\parenv*{\frac{\rho}{2}}, \label{eq:uppertrivial} \\
    \kappa_2(\rho,2)\leq\kappa^{\mathrm{Lin}}_2(\rho,2) & \leq \begin{cases}
    1-(4H_4(\rho)-f(\rho)) & \rho\in[0,\frac{3}{4}),\\
    0 & \rho\in[\frac{3}{4},1], \label{eq:upperbetter}
    \end{cases}
\end{align}
where, for all $\rho\in[0,\frac{3}{4})$ we define
\begin{align*}
    f(\rho)&\eqdef H_2(s(\rho))+2s(\rho)+2(1-s(\rho))H_2\parenv*{\frac{\rho-s(\rho)}{1-s(\rho)}},\\
    s(\rho)&\eqdef \frac{1}{10}\parenv*{1+8\rho-\sqrt{1+16\rho-16\rho^2}}.
\end{align*}
The bounds of~\cite{elimelech2021generalized} are depicted in Figure~\ref{fig:comp}, and a gap between the lower and upper bounds is evident. Our main theorem, proved in the following section, closes the gap completely, while extending the setting to a general alphabet of size $q$, giving us the exact value of $\kappa_2(\rho,q)$.

\begin{figure}[t]
\centering
\begin{overpic}[scale=0.9]
    {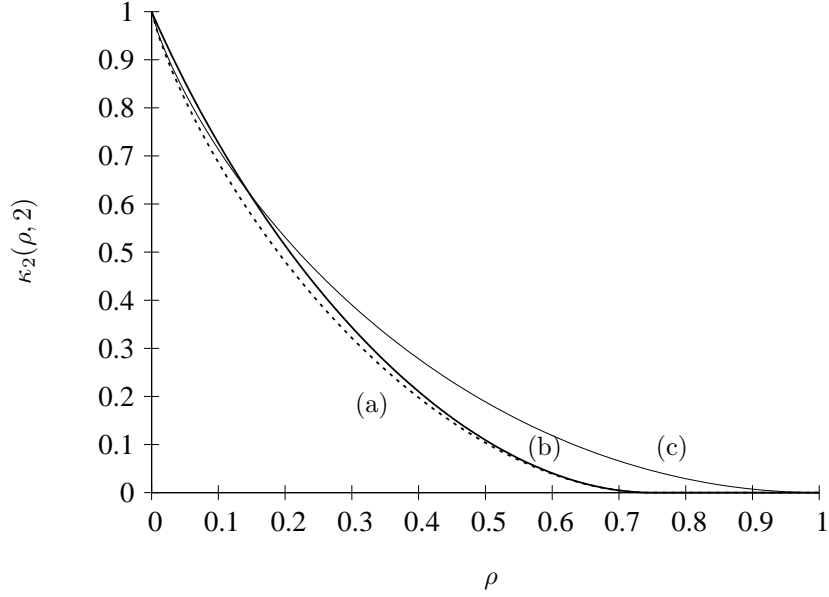}
    \put(0,35){\begin{turn}{90}$\kappa_2(\rho,2)$\end{turn}}
    \put(55,0){$\rho$}
    \put(40,20){(a)}
    \put(60,15){(b)}
    \put(75,15){(c)}
\end{overpic}
\caption{
A comparison of the bounds on $\kappa_2(\rho,2)$: (a) the ball-covering lower bound of~\eqref{eq:lowerball}, (b) the upper bound of \eqref{eq:upperbetter}, and (c) the upper bound of \eqref{eq:uppertrivial}.
}
\label{fig:comp}
\end{figure}

A key component in the proofs ahead is an estimate of the size of balls. Let $V_{r,n,q}^{(t)}$ denote the size of a $t$-ball of radius $r$ in $G_q^{t\times n}$ with respect to $d^{(t)}$,
\[ V_{r,n,q}^{(t)} \eqdef \abs*{B_r^{(t)}(\bv)},\]
which does not depend on the center, $\bv$, as the metric is translation invariant. By choosing $\bv=\Zero$, one can easily see that $V_{r,n,q}^{(t)}$ counts the number of $t\times n$ matrices with at most $r$ non-zero columns. Thus, after conveniently denoting $\rho=\frac{r}{n}$,
\[ V^{(t)}_{\rho n,n,q}=\sum_{i=0}^{\floor{\rho n}} \binom{n}{i}(q^t-1)^i=V^{(1)}_{\rho n,n,q^t}.
\]
By a standard use of Stirling's approximation (e.g., see~\cite[Chapter 3]{GurRudSud22}) it is well known that for $\rho\in \sparenv{0,1-\frac{1}{q^t}}$
\begin{equation} \label{eq:BallSizeIneq}
   q^{tn(H_{q^t}(\rho)-o(1))}\leq V^{(1)}_{\rho n,n,q^t}\leq  q^{tn H_{q^t}(\rho)},
\end{equation}
and therefore 
\begin{equation}
    \label{eq:BallEntropy}
    V^{(t)}_{\rho n,n,q}=\sum_{i=0}^{\floor{\rho n}} \binom{n}{i}(q^t-1)^i=V^{(1)}_{\rho n,n,q^t}
    = \begin{cases}
q^{tn(H_{q^t}(\rho)+o(1))} & \rho \in[0,1-\frac{1}{q^t}),\\
q^{tn(1-o(n))}& \rho\in[1-\frac{1}{q^t},1].
\end{cases}
\end{equation} Using the same approximation, we also mention that for $0\leq m\leq n$, $n>0$,
\begin{equation}
\label{eq:binomial}
\binom{n}{m}(q-1)^m = q^{n(H_q(m/n)+o(1))}.
\end{equation}
Finally, here in~\eqref{eq:BallEntropy}-\eqref{eq:binomial} and throughout the paper, we use $o(1)$ to denote a function of $n$ whose limit is $0$ as $n\to\infty$. Then, given a continuous real function $f(x)$, we shall often use the fact that $f(x+o(1))=f(x)+o(1)$. 

\section{The second-order optimal rate}\label{sec:SecondOreder}

The purpose of this section is to prove the following main theorem:

\begin{theorem}
\label{th:BinRatecovering}
\[\kappa_2(\rho,q)=\begin{cases}
1-H_{q^2}(\rho) & \rho\in[0,1-\frac{1}{q^2}), \\
0 & \rho\in[1-\frac{1}{q^2},1].
\end{cases}\]
\end{theorem}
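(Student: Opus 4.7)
The lower bound is already in hand: \eqref{eq:lowerball} gives $\kappa_2(\rho,q)\ge 1-H_{q^2}(\rho)$ on $[0,1-\tfrac{1}{q^2})$, and $\kappa_2\ge 0$ is trivial on the remaining range. So my entire task is to establish the matching upper bound via an existence argument. The regime $\rho\in[1-\tfrac{1}{q^2},1]$ is easy: \eqref{eq:BallEntropy} gives $V_{\rho n,n,q}^{(2)}=q^{2n-o(n)}$, so a random code of size $M=q^{\e n}$ (any $\e>0$) misses a fixed $\bu$ with probability at most $(1-V_{\rho n,n,q}^{(2)}/q^{2n})^{M^2}\le\exp(-M^2 q^{-o(n)})=o(q^{-2n})$, and the union bound over $\bu$ gives $\kappa_2(\rho,q)=0$.

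For the main regime $\rho\in[0,1-\tfrac{1}{q^2})$, fix $\e>0$ and set $M\eqdef\lceil q^{n(1-H_{q^2}(\rho)+\e)}\rceil$. Rather than one random code, I would use two independent iid random codes $C_A=\set{\ba_1,\dots,\ba_M}$ and $C_B=\set{\bb_1,\dots,\bb_M}$, each drawn uniformly from $G_q^n$, and take $C=C_A\cup C_B$. Then $|C|\le 2M$, so the asymptotic rate of $C$ is $1-H_{q^2}(\rho)+\e$. The reason for splitting is that the ``cross'' pairs $(\ba_i,\bb_j)\in C_A\times C_B$ are built from codewords drawn from \emph{independent} sources, enabling a clean conditional analysis while still producing $M^2$ candidate pairs.

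Fixing any $\bu=(\bu_1,\bu_2)$ and using only the cross pairs for covering, I would condition on $C_A$ and exploit the independence of the $\bb_j$'s to obtain
\[
    \Pr\sparenv*{\bu\text{ not 2-covered by }C}
    \le \E_{C_A}\sparenv*{\parenv*{1-\tfrac{|U(C_A)|}{q^n}}^{\!M}}
    \le \E_{C_A}\sparenv*{\exp\parenv*{-\tfrac{M\,|U(C_A)|}{q^n}}},
\]
where $U(C_A)\eqdef\bigcup_{\ba\in C_A}\set*{\bb\in G_q^n; (\ba,\bb)\in B^{(2)}_{\rho n}(\bu)}$. A first-moment estimate gives $\E|U(C_A)|\asymp M V^{(2)}_{\rho n,n,q}/q^n$, so on the event that $|U(C_A)|$ is a constant fraction of its mean the integrand is at most $\exp(-\Theta(M^2 V^{(2)}_{\rho n,n,q}/q^{2n}))=\exp(-q^{2n\e+o(n)})$, which comfortably absorbs the eventual union bound over the $q^{2n}$ choices of $\bu$, yielding $\Pr[R_2(C)\le\rho n]\to 1$.

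The main technical obstacle is establishing a lower tail for $|U(C_A)|$ with failure probability $o(q^{-2n})$, strong enough to survive the union bound. The difficulty is that the individual slice sizes $|B|_\ba|$ range from $0$ on most of $G_q^n$ up to $V^{(1)}_{\rho n,n,q}$ for $\ba$ near $\bu_1$, so a direct McDiarmid bound loses an entropy gap of order $H_q(\rho)-H_{q^2}(\rho)$. My plan is to address this via the method of types, partitioning $C_A$ by the Hamming distance of each $\ba_i$ to $\bu_1$: within each distance class the contributing slice sizes are of the same order, making standard Chernoff/Bennett tail bounds effective, and taking a union bound over the $O(n)$ classes preserves the exponential estimate. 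Feeding the resulting bound on $\Pr[\bu\text{ not 2-covered}]$ back into a union bound over $\bu$ gives $\kappa_2(\rho,q)\le 1-H_{q^2}(\rho)+\e$ for every $\e>0$, and sending $\e\downarrow 0$ completes the proof.
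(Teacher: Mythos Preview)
Your two-code decomposition is a clean idea, and the reduction to a lower-tail estimate for $|U(C_A)|$ is correct. But the proposed fix---partitioning $C_A$ by distance to the first row $\ov_1$ of $\bv$ and invoking Chernoff/Bennett within each class---does not close the gap. Within a distance class $m$, Chernoff controls the \emph{count} $N_m$ of codewords at that distance, and all slices there have the common size $S_m=q^m V^{(1)}_{\rho n-m,n-m,q}$, so the \emph{sum} $N_mS_m$ is indeed concentrated. But $|U(C_A)|$ is the size of the \emph{union} of those slices, not their sum. Two codewords $\oa,\oa'$ at distance $m$ from $\ov_1$ with $\supp(\oa-\ov_1)=\supp(\oa'-\ov_1)$ have \emph{identical} slices, and in general the overlap structure is nontrivial; McDiarmid with Lipschitz constant $S_m$ yields deviation probability $\exp(-t^2/(N_mS_m^2))$, which is useful only once you already know that $\E|U(C_A)|$ is comparable to $N_mS_m$. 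Establishing that ``mostly disjoint'' condition is a second-moment (pairwise-overlap) computation, and it is the real technical content---no amount of type-partitioning converts a count of codewords into a lower bound on a union without it.

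The paper takes a different route and confronts the correlations head-on: a single random code with independent $\mathrm{Ber}(p)$ inclusions and a Janson-type inequality (Theorem~\ref{thm:MosheBound}) applied directly to the number $X_{\bv}$ of covering pairs. The dependence enters through $\E[X_A^{-1}\mid I_A=1]$, which Lemma~\ref{lem:InverseBinomial} and Corollary~\ref{cor:Ffunction} reduce to the overlap parameter $n_A\asymp q^{w_A}V^{(1)}_{\rho n-w_A,n-w_A,q}$. The decisive step is restricting to pairs $A$ with $w_A\approx \mu n$ for the specific value $\mu=\tfrac{q}{q+1}\rho$ (Lemma~\ref{lem:MinDistCount}): at this $\mu$ the entropy identity of Lemma~\ref{lem:EntIdentity} gives $H_q(\mu)+f(\mu)=2H_{q^2}(\rho)$, and Lemma~\ref{lem:EntIneq} guarantees $p\cdot n_A\to 0$, so $\E[X_A^{-1}\mid I_A=1]$ stays bounded below by a constant. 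The Janson exponent is then $p^2|\cA'|\asymp q^{2n\varepsilon}$, which beats the $q^{2n}$ union bound. If you want to complete your approach, you will need the analogous overlap estimate---that random slices at distance $\mu n$ are nearly disjoint---and that requires essentially the same computation and the same distinguished choice of $\mu$; ``standard Chernoff/Bennett'' alone will not produce it.
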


Since the proof of Theorem~\ref{th:BinRatecovering} is long and involved, we first describe the overall strategy in brief. We start by noting that the lower bound of~\eqref{eq:lowerball} matches the claim of Theorem~\ref{th:BinRatecovering}. Additionally, the upper bound of~\eqref{eq:upperbetter} matches the claim of Theorem~\ref{th:BinRatecovering} in the range $[1-\frac{1}{q^2},1]$. Furthermore, the case of $\rho=0$ is trivial. Hence, it remains to prove an upper bound matching Theorem~\ref{th:BinRatecovering} in the interval $(0,1-\frac{1}{q^2})$.

In order to show that $\kappa_2(\rho,q)$ is upper bounded by some number $\gamma$, we are required to find a sequence of codes with lengths that tend to infinity, whose normalized second covering radius is no more then $\rho$, and whose rate (asymptotically) does not exceed $\gamma$. 

In order to find such codes, we take a probabilistic approach. We generate random codes using a carefully chosen distribution. Then, we prove that the event of obtaining a second-order covering code with a normalized radius not bigger than $\rho$, is non-zero for a large-enough length. We then make sure that some of these codes have a sufficiently low rate. This will imply that the desired codes exist and the upper bound holds.

From now on, we fix some $\rho\in (0,1-\frac{1}{q^2})$. Let $\set{\chi_{\ov}}_{\ov\in G_2^n}$ be a set of i.i.d $\mathrm{Ber}(p)$ random variables. We consider the random code $C$ which consists of all the vectors $\ov\in G_q^n$ such that $\chi_{\ov}=1$, i.e.,
\[ C\eqdef \set*{\ov\in G_q^n ; \chi_{\ov}=1}.\]

Let $\ou_1,\ou_2\in G_q^n$ be two vectors, and assume $\bv\in G_q^{2\times n}$. We say that the unordered pair $\set{\ou_1,\ou_2}$ covers $\bv$, denoted $\set{\ou_1,\ou_2}\supsetplus \bv$, if $\bv$ is contained in at least one of the two balls of radius $\rho n$ centered at $\begin{bsmatrix}
\ou_1\\ \ou_2
\end{bsmatrix}$
and
$\begin{bsmatrix}
\ou_2\\ \ou_1
\end{bsmatrix}$.
That is,
\[\set*{\ou_1,\ou_2}\supsetplus \bv \quad\text{ iff }\quad \bv\in B_{\rho n}^{(2)}(\begin{bsmatrix}
\ou_1\\ \ou_2
\end{bsmatrix})\cup B_{\rho n}^{(2)}(\begin{bsmatrix}
\ou_2\\ \ou_1
\end{bsmatrix} ) \text{ and } \ou_1\neq\ou_2.
\]
Equivalently,
\[\set*{\ou_1,\ou_2}\supsetplus \bv \quad\text{ iff }\quad \set*{\begin{bsmatrix}
\ou_1\\ \ou_2
\end{bsmatrix},\begin{bsmatrix}
\ou_2\\ \ou_1
\end{bsmatrix} }\cap B_{\rho n}^{(2)}(\bv)\neq \emptyset \text{ and } \ou_1\neq\ou_2.
\]
Then, for any matrix $\bv\in G_q^{2\times n}$ we define the random variable
\[ X_{\bv}\eqdef\sum_{\set*{\ou_1,\ou_2}\supsetplus \bv }\chi_{\ou_1}\cdot \chi_{\ou_2}. \]
We observe that if $X_{\bv}>0$ then $\bv$ is $2$-covered by at least one matrix from $C^2$ with distinct rows. 

Aiming for a lower bound on $\Prob[X_{\bv}=0]$, we use the Janson-type concentration inequality given as follows:

\begin{theorem}[{{\cite[Theorem 11]{schwartz2011new}}}]
\label{thm:MosheBound}
Let $\set{\chi_i}_{i\in\cQ}$ be a finite set of independent Boolean random variables, and let $\cA\subseteq 2^{\cQ}$ be a family of non-empty subsets. Let $X$ be the random variable defined by 
\[ X\eqdef \sum_{A\in \cA}I_A,\quad I_A\eqdef \prod_{i\in A}\chi_i, \]
and for each $A\in\cA$ let us define 
\[ X_A\eqdef I_A +\sum_{\substack{A \neq B\in \cA\\
  A\cap B \neq \emptyset}}I_B, \quad \text{and}\quad p_A\eqdef{\Prob[I_A=1]}.\] 
  Then,
\begin{equation}
    \label{eq:JansonB}
    \Prob[X=0]\leq \exp\parenv*{-\sum_{A\in \cA}p_A\E\sparenv*{\frac{1}{X_A} ; I_A=1}}.
\end{equation}
\end{theorem}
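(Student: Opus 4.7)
The plan is to prove this improved Janson-type inequality by combining the classical iterative-conditioning proof of Janson's inequality with an averaging argument over orderings of $\cA$. The classical bound $\Prob\sparenv*{X=0} \leq \exp(-\mu + \Delta/2)$ becomes weak whenever the dependency graph on $\cA$ has dense clusters, since $\Delta$ can dominate; the refinement here effectively discounts each $A$'s contribution by the local cluster-density factor $1/X_A$, which recovers the sharp rate $\exp(-\mu)$ in the independent case and remains tight in the fully-clustered regime.

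Fix an ordering $A_1,\dots,A_m$ of $\cA$ and start from the telescoping identity
\[\Prob\sparenv*{X=0} = \prod_{j=1}^m \Prob\sparenv*{I_{A_j} = 0 \mid I_{A_1} = \cdots = I_{A_{j-1}} = 0}.\]
Applying $1-x \leq e^{-x}$ to each factor, it suffices to exhibit some ordering for which
\[\sum_{j=1}^m \Prob\sparenv*{I_{A_j} = 1 \mid I_{A_1} = \cdots = I_{A_{j-1}} = 0} \geq \sum_{A\in\cA} p_A \E\sparenv*{\frac{1}{X_A};\, I_A = 1}.\]
Two preliminary simplifications bring the left-hand side into workable form. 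First, since $I_{A_k}$ is independent of $I_{A_j}$ whenever $A_k\cap A_j=\emptyset$, the conditioning in each factor reduces to the intersecting $A_k$'s only. Second, the resulting denominator is at most $1$, yielding the pointwise lower bound
\[\Prob\sparenv*{I_{A_j}=1\mid\cdots} \geq \Prob\sparenv*{I_{A_j}=1,\ \bigcap_{\substack{k<j\\ A_k\cap A_j\ne\emptyset}} \set*{I_{A_k}=0}}.\]

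The decisive step is to average this lower bound over a uniformly random ordering $\pi$ of $\cA$. For any fixed outcome of the $\chi_i$'s with $I_A=1$, the probability over $\pi$ that $A$ precedes every other active set intersecting it is exactly $1/X_A$ by symmetry; interchanging expectations gives
\[\E_\pi\sparenv*{\sum_{A\in\cA} \Prob\sparenv*{I_A = 1,\ \bigcap_{\substack{B\ne A\\ B\cap A\ne\emptyset\\ \pi(B)<\pi(A)}} \set*{I_B=0}}} = \sum_{A\in\cA} \E\sparenv*{\frac{I_A}{X_A}} = \sum_{A\in\cA} p_A \E\sparenv*{\frac{1}{X_A};\, I_A=1},\]
where the semicolon notation denotes the expectation conditional on $I_A=1$. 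Hence some ordering $\pi$ realizes the required lower bound on the sum of conditional probabilities, and the exponential estimate finishes the proof. The main obstacle is justifying this averaging rigorously: one must verify the symmetry identity $\Prob_\pi\sparenv*{A\text{ is first active intersecting}}=1/X_A$ and handle the independence-based reduction of the conditioning, but no further delicate estimates are needed beyond this combinatorial bookkeeping.
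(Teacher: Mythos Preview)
The paper does not prove this theorem; it is quoted from \cite{schwartz2011new} as a black-box tool. Your proposed argument is in fact the standard proof of this inequality (telescoping over an ordering of $\cA$, then averaging over a uniformly random ordering), and the key identity $\E_\pi\sparenv*{I_A\prod_{B}(1-I_B)}=I_A/X_A$ together with the conclusion $\E_\pi[S(\pi)]=\sum_A p_A\,\E\sparenv*{1/X_A; I_A=1}$ is carried out correctly.

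There is, however, a real gap in the reduction step. You assert that because $I_{A_k}$ is independent of $I_{A_j}$ whenever $A_k\cap A_j=\emptyset$, the conditioning on the non-intersecting predecessors may simply be dropped, yielding
\[
\Prob\sparenv*{I_{A_j}=1 \ \Big|\ \bigcap_{k<j}\set*{I_{A_k}=0}} \ \geq\ \Prob\sparenv*{I_{A_j}=1,\ \bigcap_{\substack{k<j\\ A_k\cap A_j\neq\emptyset}}\set*{I_{A_k}=0}}.
\]
Pairwise independence of $I_{A_j}$ and the non-intersecting $I_{A_k}$ is not sufficient for this. Writing $F$ for the intersection over the indices $k<j$ with $A_k\cap A_j\neq\emptyset$ and $G$ for the intersection over the remaining indices, the event $\set{I_{A_j}=1}\cap F$ and the event $G$ are in general \emph{not} independent, since the sets contributing to $F$ may share coordinates with the sets contributing to $G$. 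What actually justifies the displayed inequality is a Harris/FKG step: conditionally on $\set{I_{A_j}=1}$ (which fixes $\chi_i=1$ for $i\in A_j$), both $F$ and $G$ are decreasing events in the remaining independent $\chi_i$'s and are therefore positively correlated, while $G$ is genuinely independent of $\set{I_{A_j}=1}$; combining these gives $\Prob\sparenv*{\set{I_{A_j}=1}\cap F\mid G}\geq \Prob\sparenv*{\set{I_{A_j}=1}\cap F}$, and then dropping the factor $\Prob\sparenv*{F\mid G}\leq 1$ in the denominator completes the bound. Once this correlation argument is supplied, the rest of your proof goes through unchanged.
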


One can easily see that for any $\bv\in G_q^{2\times n}$, our probabilistic model exactly fits the setting of Theorem~\ref{thm:MosheBound} with 
\[ X=X_{\bv}, \quad \cQ=G_q^n, \quad \text{and} \quad \cA=\set*{\set*{\ou_1,\ou_2}  ; \ou_1\neq\ou_2 \text{ and }\set*{\ou_1,\ou_2}\supsetplus \bv }.\]

Given $A=\set{\ou_1,\ou_2}$, with $\ou_1,\ou_2\in G_q^n$, and given $\bv=\begin{bsmatrix}
\ov_1\\ \ov_2
\end{bsmatrix}\in G_q^{2\times n}$, we shall conveniently define
\begin{equation}
\label{eq:defwa}
w_A\eqdef \min_{i,j\in\set{1,2}}{d(\ou_i,\ov_j)}, \end{equation}
where the dependence on $\bv$ is implicit in the notation $w_A$.

 \begin{lemma}\label{lem:InverseBinomial}
 With the notation above, for any $\bv=\begin{bsmatrix}
 \ov_1\\ \ov_2
 \end{bsmatrix}\in G_q^{2\times n}$ and $A=\set{\ou_1,\ou_2}\in \cA$,  we have that 
 \[  \frac{1}{2}\cdot\frac{1-(1-p)^{n_A+1}}{p(n_A+1)}\leq\E\sparenv*{\frac{1}{X_A} ; I_A=1}\leq \frac{1-(1-p)^{n_A+1}}{p(n_A+1)} \]
 where $n_A$ is an integer satisfying 
 \[ q^{w_A}\cdot V^{(1)}_{\rho n-w_A,n-w_A,q}\leq n_A \leq  4\cdot q^{w_A}\cdot V^{(1)}_{\rho n-w_A,n-w_A,q}.\]
 \end{lemma}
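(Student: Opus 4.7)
The plan is to first express $X_A$ conditioned on $I_A=1$ in a form amenable to exact computation, then sandwich it between two Binomial variables so that the classical identity
\[\E\sparenv*{\frac{1}{1+B_m}}=\frac{1-(1-p)^{m+1}}{p(m+1)}\qquad\text{for }B_m\sim\mathrm{Binomial}(m,p)\]
delivers both inequalities at once, and finally to count the relevant index set in terms of $q^{w_A}V^{(1)}_{\rho n-w_A,n-w_A,q}$. Conditioning on $I_A=1$ fixes $\chi_{\ou_1}=\chi_{\ou_2}=1$: every $B\in\cA\setminus\set{A}$ with $A\cap B\neq\emptyset$ has the form $\set{\ou_i,\ow}$ with $i\in\set{1,2}$, $\ow\notin\set{\ou_1,\ou_2}$, and $\set{\ou_i,\ow}\supsetplus\bv$, and on the event $I_A=1$ we have $I_B=\chi_{\ow}$. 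Setting
\[T_i\eqdef \set*{\ow\in G_q^n\setminus\set{\ou_1,\ou_2} ; \set{\ou_i,\ow}\supsetplus\bv}\quad(i=1,2),\qquad c_{\ow}\eqdef \ind[\ow\in T_1]+\ind[\ow\in T_2]\in\set{1,2},\]
one obtains $X_A = 1+\sum_{\ow\in T_1\cup T_2}c_{\ow}\chi_{\ow}$ on the event $\set{I_A=1}$.

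With $m\eqdef\abs*{T_1\cup T_2}$ and $Z\eqdef \sum_{\ow\in T_1\cup T_2}\chi_{\ow}\sim\mathrm{Binomial}(m,p)$, the bounds $1\leq c_{\ow}\leq 2$ give the sandwich $1+Z\leq X_A\leq 1+2Z$. The upper bound of the lemma then follows directly from the identity above applied to $\E\sparenv*{1/(1+Z)}$. For the lower bound, the elementary pointwise inequality $\frac{1}{1+2k}\geq \frac{1}{2(1+k)}$ (for $k\in\N$) gives $\E\sparenv*{1/(1+2Z)}\geq \frac{1}{2}\E\sparenv*{1/(1+Z)}$, yielding the prefactor $\frac{1}{2}$.

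It remains to bound $m$. For each $(i,j)\in\set{1,2}^2$ let $H_{i,j}$ be the set of $\ow\in G_q^n$ for which $\bv$ lies in the $2$-ball of radius $\rho n$ around the matrix having $\ou_i$ in row $j$ and $\ow$ in the other row; letting the entries of $\ow$ on $\supp(\ov_j-\ou_i)$ vary freely and constraining the rest to a Hamming ball of radius $\rho n-d(\ou_i,\ov_j)$ yields $\abs*{H_{i,j}}=f(d(\ou_i,\ov_j))$, where $f(w)\eqdef q^w V^{(1)}_{\rho n-w,n-w,q}$. Pascal's recurrence $V^{(1)}_{r,n,q}=V^{(1)}_{r,n-1,q}+(q-1)V^{(1)}_{r-1,n-1,q}$ shows that $f$ is non-increasing on $[0,\rho n]$; since $d(\ou_i,\ov_j)\geq w_A$ for all $(i,j)$ with equality attained for at least one pair, $\abs*{\bigcup_{i,j}H_{i,j}}$ lies in $[f(w_A),4f(w_A)]$. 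Because $T_1\cup T_2$ differs from this union by at most the two points $\ou_1,\ou_2$, an appropriate integer $n_A\in[f(w_A),4f(w_A)]$ close to $m$ can be chosen so that both inequalities in the lemma hold. The principal difficulty --- and the expected hard part of the proof --- is the multiplicity $c_{\ow}=2$ for vectors lying in both $T_1$ and $T_2$, which prevents $X_A$ from being a true Binomial and forces the factor $\frac{1}{2}$ in the lower bound; that factor is precisely the price paid for handling the multiplicity through the coarser sandwich $X_A\leq 1+2Z$ combined with $\frac{1}{1+2k}\geq \frac{1}{2(1+k)}$.
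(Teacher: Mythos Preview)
Your proposal is correct and follows essentially the same route as the paper: both condition on $I_A=1$, write $X_A=1+\sum_{\ow}c_{\ow}\chi_{\ow}$ with $c_{\ow}\in\{1,2\}$, sandwich $X_A$ between $1+Z$ and $1+2Z$ for $Z\sim\mathrm{Bin}(n_A,p)$, apply the identity $\E[1/(1+Z)]=\frac{1-(1-p)^{n_A+1}}{p(n_A+1)}$, and then bound $n_A$ via the four sets $H_{i,j}$ (the paper's $S(i,j)$) together with the monotonicity of $w\mapsto q^{w}V^{(1)}_{\rho n-w,n-w,q}$. The only cosmetic difference is that the paper simply takes $n_A$ to be $\abs{T_1\cup T_2}$ and asserts the bracket $[f(w_A),4f(w_A)]$ directly, whereas you phrase it as ``choosing an appropriate $n_A$ close to $m$''; in both cases the possible discrepancy of at most two points from excluding $\ou_1,\ou_2$ is harmless for the intended use.
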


\begin{proof}
Under the conditional measure given the event $\set{I_A=1}$, with probability $1$ the random variable $X_A$ is equal to the random variable $Z+1$, where $Z$ is given by 
\[ Z\eqdef \sum_{\ow\in G_q^{n}\setminus \set*{\ou_1,\ou_2}}\alpha(\ow) \chi_{\ow},\quad \text{where}\quad \alpha(\ow)\eqdef\ind_{\set*{\ow,\ou_1}\supsetplus \bv}+\ind_{\set*{\ow,\ou_2}\supsetplus \bv},\]
and where for an event $P$, $\ind_P$ denotes its corresponding indicator function.
 
For each $\ow\in G_q^n\setminus\set{\ou_1,\ou_2}$, we have $\alpha(\ow)\in \set{0,1,2}$, and therefore, for $\widetilde{Z}$ defined as 
\[\tZ\eqdef \sum_{\substack{\ow\in G_q^n\setminus \set*{\ou_1,\ou_2}\\ \alpha(\ow)\neq 0}}\chi_{\ow},\]
 we have
\[ \tZ \leq Z\leq 2\tZ. \]
In particular, under the conditional measure given the event $\set{I_A=1}$, with probability $1$ it holds that 
\[ \frac{1}{2}\cdot \frac{1}{\tZ+1}\leq \frac{1}{X_A}\leq  \frac{1}{\tZ+1}. \] 
By the monotonicity of the expectation, 
\[ \frac{1}{2}\cdot\E\sparenv*{\frac{1}{\tZ+1} ; I_A=1} \leq \E\sparenv*{ \frac{1}{X_A} ; I_A=1}\leq \E\sparenv*{\frac{1}{\tZ+1} ; I_A=1}. \]

We observe that $\tZ$ is a function of $\set{\chi_{\ow}}_{\ow\in G_q^n\setminus \set{\ou_1,\ou_2}}$ and $I_A$ is a function of $\chi_{\ou_1}$ and $\chi_{\ou_2}$. Hence, $\tZ$ is independent of $I_A$, which implies that 
\[\E\sparenv*{\frac{1}{\tZ+1} ; I_A=1} =\E\sparenv*{\frac{1}{\tZ+1}} . \]
Directly from its definition, we get that  $\tZ\sim \mathrm{Bin}(n_A,p)$ with 
\[n_A=\abs*{\set*{\ow\in G_q^{n}\setminus\set*{\ou_1,\ou_2}  ;\alpha(\ow)>0}}.\]
We now use the result given in \cite[Chapter 3.1, Eq.~(3.4)]{chao1972negative}, stating that if $Y\sim \mathrm{Bin}(n,p)$, then 
\[\E\sparenv*{\frac{1}{Y+1}}=\frac{1-(1-p)^{n+1}}{p(n+1)},\]
and conclude that 
\[\frac{1}{2}\cdot \frac{1-(1-p)^{n_A+1}}{p(n_A+1)}\leq\E\sparenv*{\frac{1}{X_A} ; I_A=1}\leq  \frac{1-(1-p)^{n_A+1}}{p(n_A+1)}.\]
  
In order to complete the proof it remains to bound $n_A$. We recall that $n_A$ is the number of vectors in $G_q^{n}\setminus\set{\ou_1,\ou_2}$ that together with  $\ou_1$ or $\ou_2$ can form a $2\times n$ matrix in $B_{\rho n}^{(2)}(\bv)$. We further sub-divide this set (perhaps with overlaps) in the following manner. For $i,j\in \set{1,2}$ we define $\phi_{i,j}(\ow)$ to be the $2\times n$ matrix whose $i$th row is $\ow$, and whose other row (the $(3-i)$th row) is $\ou_j$. We then define
\[ S(i,j) \eqdef \set*{\ow\in G_q^n\setminus\set*{\ou_1,\ou_2} ; d^{(2)}(\phi_{i,j}(\ow),\bv) \leq \rho n}.\]
Since we can flip simultaneously the order of rows in $\phi_{i,j}(\ow)$ and $\bv$ without affecting the distance between them, we can equivalently write,
\begin{equation}
\label{eq:Sij}
S(i,j) = \set*{\ow\in G_q^n\setminus\set*{\ou_1,\ou_2} ; d^{(2)}\parenv*{
\begin{bmatrix}
\ow \\ \ou_j
\end{bmatrix},
\begin{bmatrix}
\ov_i \\ \ov_{3-i}
\end{bmatrix}
} \leq \rho n}.
\end{equation}
By recalling the definition of $n_A$ one may easily observe that
\[ \max_{i,j\in\set{1,2}}{\abs*{S(i,j)}}\leq n_A\leq \sum_{i,j\in\set{1,2}}{\abs*{S(i,j)}}\leq 4\cdot \max_{i,j\in\set{1,2}}{\abs*{S(i,j)}}.  \] 

Let us now compute $\abs{S(i,j)}$ for any $i,j\in \set{1,2}$. For our convenience we denote $r\eqdef\floor{\rho n}$. First, if $d(\ou_j,\ov_{3-i})>r$, then by~\eqref{eq:Sij} we must have $\abs{S(i,j)}=0$. Otherwise, denote $m\eqdef d(\ou_j,\ov_{3-i})\leq r$. In that case, the choices for $\ow\in S(i,j)$ are exactly the following: In the $m$ positions where $\ou_j$ and $\ov_{3-i}$ differ, we can set $\ow$ arbitrarily. In the remaining $n-m$ positions of $\ow$ we copy the entries of $\ov_i$, but we may change the value of at most $r-m$ of those positions. Hence,
\[ \abs*{S(i,j)}=q^m\sum_{\ell=0}^{r-m}\binom{n-m}{\ell}(q-1)^\ell=q^m\cdot V^{(1)}_{r-m,n-m,q}. \]

We observe that the expression describing $\abs{S(i,j)}$ is monotone non-increasing in $m$. This might be proved by noting that when we change an entry in $\ou_j$ in one of the coordinates in $\supp(\ou_j-\ov_{3-i})$, and make it equal to its counterpart in $\ov_{3-i}$ (thereby decreasing $m$ by $1$), any vector that belonged to $S(i,j)$ before the change, still does after the change, and in particular the size of $S(i,j)$ does not decrease. This shows that 
\[ \max_{i,j\in\set{1,2}}{\abs*{S(i,j)}}=q^{w_A}\sum_{i=0}^{r-w_A}\binom{n-w_A}{i}(q-1)^i=q^{w_A}\cdot V^{(1)}_{r-w_A,n-w_A,q},\] 
where $w_A$ is defined in~\eqref{eq:defwa}.
\qed
 \end{proof}

By further analyzing the function
$\frac{1-(1-p)^{n_A+1}}{p(n_A+1)}$ from Lemma~\ref{lem:InverseBinomial}, we immediately arrive at the following corollary:

\begin{corollary}\label{cor:Ffunction}
For any $\begin{bsmatrix}
 \ov_1\\ \ov_2
 \end{bsmatrix}=\bv\in G_q^{2\times n}$ and $A=\set{\ou_1,\ou_2}\in \cA$,  with $w_A=m=\mu n\leq \rho n $ we have that 
 \[
    \E\sparenv*{\frac{1}{X_A} ; I_A=1}\geq \frac{1}{2}\cdot\frac{1-(1-p)^{q^{n\cdot (f(\mu)+\frac{3}{n})}}}{p\cdot q^{n\cdot (f(\mu)+\frac{3}{n})}}=\frac{1}{2}\cdot\frac{1-(1-p)^{q^{n\cdot (f(\mu)+o(1))}}}{p\cdot q^{n\cdot (f(\mu)+o(1))}},
\]
where 
\[
f(\mu)\eqdef \begin{cases}
1 & \mu\in[0,1-q(1-\rho)], \\
\mu+(1-\mu)  H_q\parenv*{\frac{\rho-\mu}{1-\mu}}& \mu\in(1-q(1-\rho),\rho].
\end{cases}
\]
\end{corollary}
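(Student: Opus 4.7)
The plan is to combine Lemma~\ref{lem:InverseBinomial} with two ingredients: a monotonicity property of the function $g(y) \eqdef (1-(1-p)^y)/(py)$, and the upper bound on ball sizes from~\eqref{eq:BallSizeIneq}. Lemma~\ref{lem:InverseBinomial} already yields $\E[1/X_A; I_A=1] \geq \tfrac12 g(n_A+1)$ together with the estimate $n_A \leq 4 q^{w_A} V^{(1)}_{\rho n - w_A, n-w_A, q}$. Hence, it suffices to (i) show that $g$ is non-increasing on $(0,\infty)$, so that any upper bound on $n_A+1$ transfers to a lower bound on $g(n_A+1)$; and (ii) produce the upper bound $n_A+1 \leq q^{n(f(\mu)+3/n)}$.

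For (i), I set $a \eqdef 1-p$ and change variables to $t \eqdef -y\ln a > 0$; after simplification, monotonicity of $g$ reduces to the inequality $t e^{-t} \leq 1-e^{-t}$, equivalently $e^t \geq 1+t$, which is standard. For (ii), I split on the two branches of $f(\mu)$. When $\mu \in (1-q(1-\rho),\rho]$, the ratio $(\rho-\mu)/(1-\mu)$ lies in $[0,1-1/q)$, and applying the upper bound from~\eqref{eq:BallSizeIneq} to the ball of radius $(\rho-\mu)n$ in length $(1-\mu)n$ gives $q^{w_A} V^{(1)}_{\rho n - w_A, n-w_A, q} \leq q^{n(\mu+(1-\mu) H_q((\rho-\mu)/(1-\mu)))} = q^{nf(\mu)}$. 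When $\mu \in [0,1-q(1-\rho)]$, the same ratio lies in $[1-1/q,1)$, and the trivial bound $V^{(1)}_{\rho n - w_A, n-w_A, q} \leq q^{(1-\mu)n}$ yields $q^{w_A} V^{(1)} \leq q^n = q^{nf(\mu)}$. In both cases $n_A+1 \leq 5 q^{nf(\mu)} \leq q^{nf(\mu)+3} = q^{n(f(\mu)+3/n)}$, where I used $5 \leq q^3$ for every integer $q \geq 2$.

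Combining (i) and (ii) yields the stated first inequality, and the second equality is immediate since $3/n = o(1)$. The only mildly delicate step is the monotonicity in (i), but once recast as the elementary inequality $e^t \geq 1+t$ it becomes routine; every other estimate is a direct use of the ball-size bound~\eqref{eq:BallSizeIneq}.
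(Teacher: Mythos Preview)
Your proposal is correct and follows essentially the same route as the paper: both combine the lower bound of Lemma~\ref{lem:InverseBinomial} with the monotonicity of $y\mapsto (1-(1-p)^{y})/(py)$ and the ball-size upper bound~\eqref{eq:BallSizeIneq}, splitting on the two ranges of $\mu$ and absorbing the factor $5$ into $q^{3}$. The only difference is that you supply an explicit proof of the monotonicity (via the substitution $t=-y\ln(1-p)$ and the inequality $e^{t}\geq 1+t$), whereas the paper simply asserts it by ``standard analysis techniques'' or the interpretation of this quantity as $\E[(Y+1)^{-1}]$ for $Y\sim\mathrm{Bin}(y-1,p)$.
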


\begin{proof}
We consider the function  $\frac{1-(1-p)^{n_A+1}}{p(n_A+1)}$ as a function of $n_A$. By standard analysis techniques, or by recalling its equivalent definition as an inverse moment of a $\mathrm{Bin}(n_A,p)$ random variable, we note that $\frac{1-(1-p)^{n_A+1}}{p(n_A+1)}$ is decreasing with $n_A$. We observe that for $\mu \in (1-q(1-\rho),\rho]$ we have 
$\frac{\rho-\mu}{1-\mu}\in [0,1-\frac{1}{q})$, and therefore by Lemma~\ref{lem:InverseBinomial} and \eqref{eq:BallSizeIneq} we have 
\begin{align*}
    n_A+1&\leq 1+4\cdot q^{m}\sum_{i=0}^{r-m}\binom{n-m}{i}(q-1)^i=1+4\cdot q^{m}\cdot V^{(1)}_{r-m,n-m,q} \\
    &\leq 5q^{m}\cdot V^{(1)}_{r-m,n-m,q}\leq q^{\mu n +3}q^{n(1-\mu)H_q\parenv*{\frac{\rho-\mu}{1-\mu}}}=q^{n(f(\mu)+\frac{3}{n})}.
\end{align*}

Combining the above inequality with the (decreasing) monotonicity and the lower-bound from Lemma~\ref{lem:InverseBinomial} 
\[ \E\sparenv*{\frac{1}{X_A} ; I_A=1}\geq \frac{1}{2}\cdot\frac{1-(1-p)^{q^{n\cdot (f(\mu)+\frac{3}{n})}}}{p\cdot q^{n\cdot (f(\mu)+\frac{3}{n})}}.\]
For $ \mu\in[0,1-q(1-\rho)] $ we have 
\[n_A+1\leq 5q^{m}\cdot V^{(1)}_{r-m,n-m,q}\leq 5q^n\leq q^{n f(\mu)+3}\leq q^{n(f(\mu)+\frac{3}{n})},\]
and the conclusion similarly follows.
\qed
\end{proof}
 
We now turn towards an asymptotic analysis of $\Prob[X_{\bv}=0]$. Our strategy is to show, using the Janson-type inequality given in Theorem~\ref{thm:MosheBound}, that for an appropriate choice of $p$, this probability decreases rapidly to $0$ for all the matrices in $G_q^{2\times n}$. Let $A=\set{\ou_1,\ou_2}$ be such that $w_A=m=\mu n\leq \rho n$. As we continue, we shall find the case of $\mu=\frac{q}{q+1}\rho$ of particular interest. In the following lemma we show the existence of a large subset $\cA'\subseteq \cA$ such that for all $A\in\cA'$ we have $w_A=n(\mu + o(1))$.

\begin{lemma} \label{lem:MinDistCount}
Let $\begin{bsmatrix}
\ov_1\\ \ov_2
\end{bsmatrix}=\bv\in G_q^{2\times n}$ be any matrix, $\rho\in(0,1-\frac{1}{q^2})$, and $\mu=\frac{q}{q+1}\rho$. Then there exists a subset $\cA'\subseteq\cA$ with
\[    \abs*{\cA'} \geq q^{n\parenv*{H_q(\mu)+\mu +(1-\mu)  H_q \parenv*{\frac{\rho-\mu}{1-\mu}}+o(1)}},\]
such that for all $A\in\cA'$ we have 
\[ \mu n -11 \leq w_A \leq \mu n.\]
\end{lemma}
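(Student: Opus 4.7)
Set $m := \lfloor \mu n \rfloor$ and $r := \lfloor \rho n \rfloor$. My plan is to take
\[\cP := \{(\ou_1, \ou_2) \in G_q^n \times G_q^n : |\supp(\ou_1 - \ov_1)| = |\supp(\ou_2 - \ov_2)| = m,\; |\supp(\ou_1 - \ov_1) \cup \supp(\ou_2 - \ov_2)| \leq r\},\]
and form $\cA'$ from the unordered images $\{\ou_1, \ou_2\}$ of those $(\ou_1, \ou_2) \in \cP$ satisfying $w_A \geq m - 10$; since $m \geq \mu n - 1$, this yields $w_A \geq \mu n - 11$, while $d(\ou_1, \ov_1) = d(\ou_2, \ov_2) = m$ automatically forces $w_A \leq m \leq \mu n$.

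To lower bound $|\cP|$, write $j = |\supp(\ou_1 - \ov_1) \cap \supp(\ou_2 - \ov_2)|$; then $|\cP| = \sum_{j \geq 2m-r} \binom{n}{m}\binom{m}{j}\binom{n-m}{m-j}(q-1)^{2m}$. A direct computation shows that $\rho < 1 - 1/q^2$ is equivalent to $\mu^2 < 2\mu - \rho$, which in turn says the unconstrained hypergeometric mode $j \approx m^2/n$ of $\binom{m}{j}\binom{n-m}{m-j}$ lies below $2m-r$, so the maximum on the admissible range $j \geq 2m-r$ is attained at the boundary $j = 2m-r$. Thus $|\cP|$ is at least $\binom{n}{m}\binom{m}{2m-r}\binom{n-m}{r-m}(q-1)^{2m}$. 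Applying \eqref{eq:binomial} to each binomial and exploiting the identity $m \cdot (q+1)/q = \rho n + o(n)$, which holds precisely for $\mu = \frac{q}{q+1}\rho$, this boundary term collapses to $q^{n(H_q(\mu) + \mu + (1-\mu)H_q(\frac{\rho-\mu}{1-\mu}) + o(1))}$, matching the target.

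It remains to show that a positive fraction of $\cP$ has $w_A \geq m - 10$. Endow $\cP$ with the uniform measure; only $d(\ou_1, \ov_2)$ and $d(\ou_2, \ov_1)$ require attention. With $D := \supp(\ov_1 - \ov_2)$, $d := |D|$, $S_1 := \supp(\ou_1 - \ov_1)$, and $b := |S_1 \cap D|$, a per-position computation using that $\ou_1|_{S_1}$ is uniform over $G_q \setminus \{\ov_1[i]\}$ at each $i$ gives $\E[d(\ou_1, \ov_2) \mid S_1] = d + m - bq/(q-1)$. Since $b$ is hypergeometric with mean $\mu d$ and $\mu < (q-1)/q$ (again equivalent to $\rho < 1-1/q^2$), typical $S_1$ satisfies $b \leq (q-1)d/q + O(1)$, so the conditional mean is at least $m - O(1)$. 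A Chernoff/CLT argument then yields $\Prob[d(\ou_1, \ov_2) \geq m - 10 \mid S_1] \geq c_1 > 0$ uniformly for typical $S_1$ and for all large $n$. The same analysis with $\ou_1 \leftrightarrow \ou_2$ bounds $d(\ou_2, \ov_1)$; conditionally on $(S_1, S_2)$ the two events depend on disjoint random values, so they are independent and $\Prob[w_A \geq m - 10] \geq c > 0$.

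Passing from ordered to unordered pairs costs at most a factor of $2$, and the diagonal $\ou_1 = \ou_2$ is negligible (it has cardinality at most $\binom{n}{m}(q-1)^m$, exponentially smaller than $|\cP|$ since the latter contains an extra factor $\binom{m}{2m-r}\binom{n-m}{r-m}(q-1)^m$ growing as $q^{n(\mu + (1-\mu)H_q(\frac{\rho-\mu}{1-\mu}) + o(1))}$). This yields $\cA' \subseteq \cA$ of the required size. The main obstacle is verifying uniformity (in $\bv$) of the constant $c_1$ across the full range $d \in \{0, \ldots, n\}$: the strict slack $d \cdot ((q-1)/q - \mu) > 0$ afforded by $\rho < 1 - 1/q^2$ is what makes the combination of hypergeometric concentration for $b$ and CLT-style concentration for $d(\ou_1, \ov_2) \mid S_1$ tractable uniformly in $d$, with the constant $c_1$ bounded below independently of $d$ and $n$.
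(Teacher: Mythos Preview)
Your approach is genuinely different from the paper's and the counting part is correct: your set $\cP$ does sit inside $\cA$ (since $|S_1\cup S_2|\leq r$ forces the $2$-distance to $\bv$ to be at most $\rho n$), and the boundary-term computation $\binom{n}{m}\binom{m}{2m-r}\binom{n-m}{r-m}(q-1)^{2m}=q^{n(H_q(\mu)+\mu+(1-\mu)H_q(\frac{\rho-\mu}{1-\mu})+o(1))}$ is right (the key identity $\tfrac{2m-r}{m}\to\tfrac{q-1}{q}$, whence $H_q$ of that ratio equals $1$, is exactly where $\mu=\tfrac{q}{q+1}\rho$ is used). The paper, by contrast, does not average at all: it builds $\ou_2$ and $\ou_1$ \emph{deterministically} by partitioning coordinates into four zones (according to membership in $\supp(\ov_1-\ov_2)$ and whether the entry of $\ou_2$ was changed), placing prescribed fractions of changes in each zone, and then checking all four distances $d(\ou_i,\ov_j)$ by hand. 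The constant $11$ there is simply the count of floor/ceiling losses.

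The real gap in your plan is the concentration step, and it is more delicate than ``Chernoff/CLT'' suggests. Two issues. First, your argument does not produce the specific constant $10$: what it naturally yields is $w_A\geq m-K$ for some $K=K(\rho,q)$, because the slack $\bigl(\tfrac{q-1}{q}-\mu\bigr)d$ you rely on vanishes as $d\to 0$, and for bounded $d$ you only get $\Prob[b\leq (q-1)(d+K)/q]$ bounded away from $0$ once $K$ is large enough relative to $\rho,q$ (Chebyshev here gives a bound like $C\mu/(K\gamma)$, which forces $K$ to depend on $\gamma=1-\mu q/(q-1)$). For the downstream application this is harmless—Corollary~\ref{cor:Ffunction} and Proposition~\ref{prop:ProbBoundWord} only use $w_A=\mu n+O(1)$—but it does not match the lemma as stated. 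Second, your joint control of $d(\ou_1,\ov_2)$ and $d(\ou_2,\ov_1)$ via conditional independence given $(S_1,S_2)$ is correct, but $S_1$ and $S_2$ are \emph{not} independent in $\cP$ (they are coupled through $|S_1\cap S_2|\geq 2m-r$), so you cannot simply multiply marginal ``$S_i$ typical'' probabilities; you need each marginal bad event to have probability strictly below $1/2$ (for a union bound) or a more careful two-step conditioning. Both issues are fixable—e.g., using that hypergeometric and binomial medians lie within $1$ of their means, together with a separate treatment of small, intermediate, and large $d$—but the proof as written leaves this work undone, which is precisely where the paper's explicit construction earns its keep.
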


\begin{proof}
Throughout the proof we shall occasionally use the fact that for any real $\alpha\in[0,1]$ and any integer $\ell$,
\[ \floor*{\alpha \ell}+\ceil*{(1-\alpha)\ell}=\ell.\]
Let $d\eqdef d(\ov_1,\ov_2)$ and denote $\delta\eqdef\frac{d}{n}$. By translation invariance and coordinate reordering, we may assume, without loss of generality, that $\ov_1=\ozero_n$ and $\ov_2=\ov'_2\ozero_{n-d}$, with $\ov'_2\in(G_q\setminus\set{0})^d$.

The proof strategy is to show the existence of sufficiently many elements $\set{\ou_1,\ou_2}=A\in\cA$ such that $w_A=n(\mu+o(1))$. These elements will form the set $\cA'$. We first choose $\ou_2$ to be the same as $\ov_2$, except that we change $\floor{\delta \floor{\mu n}}$ entries of the $\ov'_2$ part into other values, exactly $\floor{\frac{q-2}{q-1}\floor{\delta \floor{\mu n}}}$ of which are non-zero. We also change $\ceil{(1-\delta)\floor{\mu n}}$ entries of the $\ozero_{n-d}$ part into non-zero values. We emphasize that
\begin{align*}
    \floor*{\delta\floor*{\mu n}}&\leq \delta n,
    &
    \ceil*{(1-\delta)\floor*{\mu n}} &\leq (1-\delta) n,
\end{align*}
and so it is possible to choose that many coordinates. Thus, the number of ways for choosing $\ou_2$ in this fashion is
\begin{align}
&
\binom{\delta n}{\floor*{\delta \floor*{\mu n}}}
\binom{\floor*{\delta \floor*{\mu n}}}{\floor*{\frac{q-2}{q-1}\floor*{\delta \floor*{\mu n}}}} (q-2)^{\floor*{\frac{q-2}{q-1}\floor*{\delta \floor*{\mu n}}}}\nonumber\\
&\qquad \cdot\binom{(1-\delta)n}{\ceil*{(1-\delta)\floor*{\mu n}}} (q-1)^{\ceil*{(1-\delta)\floor*{\mu n}}} \nonumber\\
&\quad=\binom{\delta n}{\floor*{\delta \floor*{\mu n}}}
(q-1)^{\delta \mu n(1+o(1))}
\binom{(1-\delta)n}{\ceil*{(1-\delta)\floor*{\mu n}}} (q-1)^{(1-\delta)\mu n} \nonumber\\
&\quad=q^{\delta n \parenv*{H_q(\mu)+o(1)}}\cdot
q^{(1-\delta) n \parenv*{H_q(\mu)+o(1)}} \nonumber \\
&\quad=q^{n(H_q(\mu)+o(1))}, \label{eq:comb1}
\end{align}
where we used~\eqref{eq:binomial}, the continuity of the entropy function, and in particular when $q=2$ by convention we set $0^0=1$.

The set of coordinates in $\ou_2$ that started as non-zero and remained unchanged shall be denoted as $Z_{\snu}$, whereas those that were changed shall be denoted by $Z_{\snc}$. Similarly, the set of coordinates in $\ou_2$ that started as zero and remained unchanged shall be denoted as $Z_{\szu}$, whereas those that were changed shall be denoted by $Z_{\szc}$. The number of coordinates in each such set is then
\begin{align*}
\ell_{\snu} & = \delta n - \floor*{\delta\floor*{\mu n}} = \delta(1-\mu)n(1+o(1)), \\
\ell_{\snc} &= \floor*{\delta\floor*{\mu n}} = \delta\mu n(1+o(1)), \\
\ell_{\szc} &= \ceil*{(1-\delta)\floor*{\mu n}}=(1-\delta)\mu n (1+o(1)), \\
\ell_{\szu} &= (1-\delta)n - \ceil*{(1-\delta)\floor*{\mu n}}=(1-\delta)(1-\mu)n(1+o(1)).
\end{align*}
A schematic drawing is presented in Figure~\ref{fig:zones}.

\begin{figure}[t]
\centering
\begin{overpic}[scale=0.55]
    {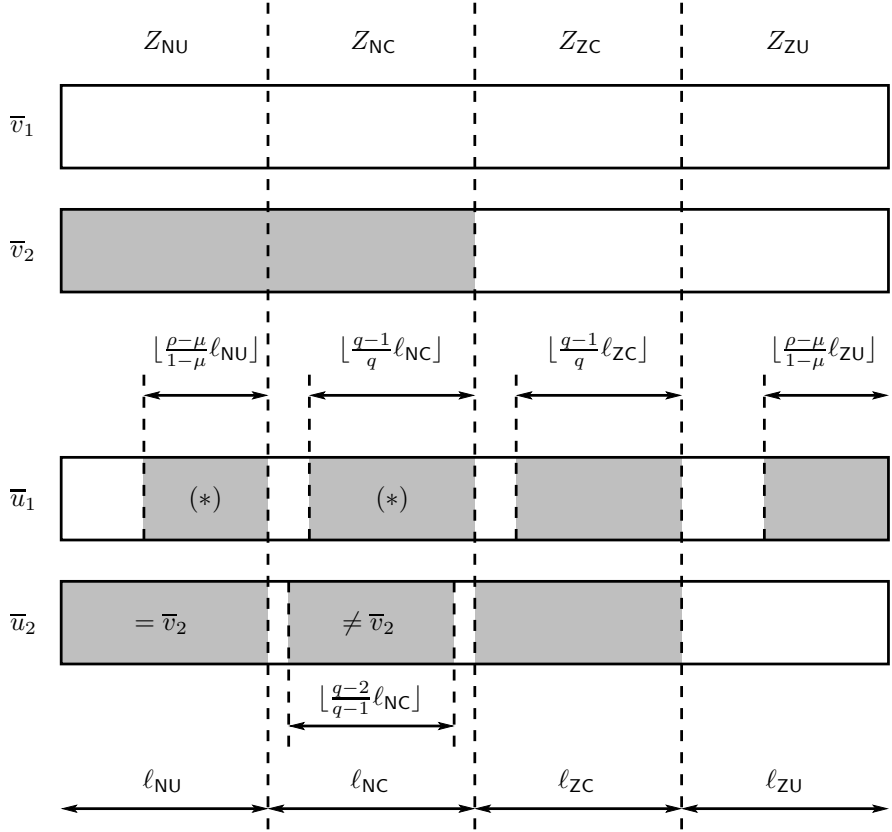}
    \put(-6,84){$\ov_1$}
    \put(-6,69){$\ov_2$}
    \put(-6,39){$\ou_1$}
    \put(-6,24){$\ou_2$}
    \put(10,94){$Z_{\snu}$}
    \put(35,94){$Z_{\snc}$}
    \put(60,94){$Z_{\szc}$}
    \put(85,94){$Z_{\szu}$}
    \put(10,5){$\ell_{\snu}$}
    \put(35,5){$\ell_{\snc}$}
    \put(60,5){$\ell_{\szc}$}
    \put(85,5){$\ell_{\szu}$}
    \put(31,15){$\floor{\frac{q-2}{q-1}\ell_{\snc}}$}
    \put(9,24){$=\ov_2$}
    \put(34,24){$\neq\ov_2$}
    \put(11,57){$\floor{\frac{\rho-\mu}{1-\mu}\ell_{\snu}}$}
    \put(33.5,57){$\floor{\frac{q-1}{q}\ell_{\snc}}$}
    \put(58.5,57){$\floor{\frac{q-1}{q}\ell_{\szc}}$}
    \put(85.5,57){$\floor{\frac{\rho-\mu}{1-\mu}\ell_{\szu}}$}
    \put(15.5,39){$(*)$}
    \put(38,39){$(*)$}
\end{overpic}
\caption{
The vectors $\ov_1,\ov_2,\ou_1,\ou_2$ in the proof of Lemma~\ref{lem:MinDistCount}. Shaded areas contain only non-zero entries, whereas non-shaded areas contain only zero entries. The notations $=\ov_2$ and $\neq\ov_2$ mean component-wise equality or inequality when comparing with the corresponding entries in $\ov_2$. Additionally, $(*)$ denotes that exactly a $\frac{q-2}{q-1}$-fraction of entries (rounded down) disagree with the corresponding entries in $\ov_2$.
}
\label{fig:zones}
\end{figure}

We now choose $\ou_1$ by describing where the non-zero elements are placed. In $Z_{\snu}$ we place $\floor{\frac{\rho-\mu}{1-\mu}\ell_{\snu}}$ non-zero elements, exactly a $\floor{\frac{q-2}{q-1}\floor{\frac{\rho-\mu}{1-\mu}\ell_{\snu}}}$ of which disagree with their corresponding elements in $\ov_2$. In $Z_{\snc}$ we place $\floor{\frac{q-1}{q}\ell_{\snc}}$ non-zero elements, exactly a $\floor{\frac{q-2}{q-1}\floor{\frac{q-1}{q}\ell_{\snc}}}$ of which disagree with their corresponding elements in $\ov_2$. In $Z_{\szc}$ we place $\floor{\frac{q-1}{q}\ell_{\szc}}$ non-zero elements, and in $Z_{\szu}$ we place $\floor{\frac{\rho-\mu}{1-\mu}\ell_{\szu}}$ non-zero elements. We again refer to Figure~\ref{fig:zones} for a schematic drawing. We observe that we can eliminate all of the floor and ceiling operations, and in return, multiply each expression by $(1+o(1))$.

It then follows that the total number of ways to choose $\ou_1$ in this fashion is\footnote{In the interest of having a readable expression, we removed all floor and ceiling operations, and notationally omitted multiplication by $(1+o(1))$, as it is absorbed in the $(1+o(1))$ from~\eqref{eq:binomial}.}
\begin{align}
&\binom{\delta(1-\mu)n}{\delta(\rho-\mu)n}\binom{\delta(\rho-\mu)n}{\frac{q-2}{q-1}\delta(\rho-\mu)n}(q-2)^{\frac{q-2}{q-1}\delta(\rho-\mu)n} && \text{(for $Z_{\snu}$)} \nonumber \\
&\quad\cdot
\binom{\delta \mu n}{\frac{q-1}{q}\delta\mu n}\binom{\frac{q-1}{q}\delta\mu n}{\frac{q-2}{q}\delta \mu n}(q-2)^{\frac{q-2}{q}\delta \mu n} && \text{(for $Z_\snc$)} \nonumber \\
&\quad\cdot
\binom{(1-\delta)\mu n}{\frac{q-1}{q}(1-\delta)\mu n}(q-1)^{\frac{q-1}{q}(1-\delta)\mu n} && \text{(for $Z_{\szc}$)} \nonumber \\
&\quad\cdot
\binom{(1-\delta)(1-\mu)n}{(1-\delta)(\rho-\mu)n}(q-1)^{(1-\delta)(\rho-\mu)n} && \text{(for $Z_{\szu}$)} \nonumber \\
& =
q^{\delta(1-\mu)n H_q\parenv*{\frac{\rho-\mu}{1-\mu}}(1+o(1))} \cdot
q^{\delta \mu n (1+o(1))} \nonumber \\
&\quad \cdot
q^{(1-\delta)\mu n (1+o(1))}
\cdot
q^{(1-\delta)(1-\mu)n H_q\parenv*{\frac{\rho-\mu}{1-\mu}}(1+o(1))} \nonumber \\
&=
q^{n\parenv*{\mu +(1-\mu)  H_q \parenv*{\frac{\rho-\mu}{1-\mu}}+o(1)}}.
\label{eq:comb2}
\end{align}
Again, we used~\eqref{eq:binomial} and the continuity of the entropy function.

Having constructed sets $A=\set{\ou_1,\ou_2}$, we turn to proving that they satisfy all the requirements. First, we examine $w_A$. We have the following inequalities: 
\begin{align*}
d(\ou_1,\ov_1) &= 
\floor*{\frac{\rho-\mu}{1-\mu}\parenv*{\delta n - \floor*{\delta\floor*{\mu n}}}} +\floor*{\frac{q-1}{q}\floor*{\delta\floor*{\mu n}}} \\
&\quad + \floor*{\frac{q-1}{q}\ceil*{(1-\delta)\floor*{\mu n}}} + \floor*{\frac{\rho-\mu}{1-\mu}\parenv*{(1-\delta)n - \ceil*{(1-\delta)\floor*{\mu n}}}} \\
&\geq \frac{\rho-\mu}{1-\mu}\parenv*{\delta n - \delta\mu n} + \frac{q-1}{q}\delta\mu n \\
&\quad + \frac{q-1}{q}(1-\delta)\mu n + \frac{\rho-\mu}{1-\mu}\parenv*{(1-\delta)n - (1-\delta)\mu n} - 8 \\
&= \mu n + \parenv*{\rho-\frac{q+1}{q}\mu}n - 8 = \mu n - 8,\\
d(\ou_1,\ov_2) & = \ceil*{\frac{1-\rho}{1-\mu}\parenv{\delta n - \floor*{\delta\floor*{\mu n}}}} + \floor*{\frac{q-2}{q-1}\floor*{\frac{\rho-\mu}{1-\mu}\parenv{\delta n - \floor*{\delta\floor*{\mu n}}}}} \\
&\quad + \ceil*{\frac{1}{q}\floor*{\delta\mu n}} + \floor*{\frac{q-2}{q-1}\floor*{\frac{q-1}{q}\floor*{\delta\floor*{\mu n}}}} + \floor*{\frac{q-1}{q}\ceil*{(1-\delta)\floor*{\mu n}}} \\
&\quad + 
\floor*{\frac{\rho-\mu}{1-\mu}\parenv*{(1-\delta)n - \ceil*{(1-\delta)\floor*{\mu n}}}} \\
&\geq \delta(1-\rho)n + \frac{q-2}{q-1}\delta(\rho-\mu)n + \frac{1}{q}\delta\mu n + \frac{q-2}{q}\delta\mu n \\
&\quad + \frac{q-1}{q}(1-\delta)\mu n + (1-\delta)(\rho-\mu)n - 11 \\
& = \mu n + \parenv*{1-\frac{q^2}{q^2-1}\rho}\delta n - 11 
\overset{(a)}{\geq} \mu n - 11,
\\
d(\ou_2,\ov_1) & = \delta n - \floor*{\delta\floor*{\mu n}}+ \floor*{\frac{q-2}{q-1}\floor*{\delta\floor*{\mu n}}}+\ceil*{(1-\delta)\floor*{\mu n}}\\
&\geq (1-\mu)\delta n+\frac{q-2}{q-1}\delta\mu n+(1-\delta)\mu n - 4 \\
& = \mu n + \parenv*{1-\frac{q^2}{q^2-1}\rho}\delta n - 4 
\overset{(a)}{\geq} \mu n - 4,
\\
d(\ou_2,\ov_2) & = \floor*{\delta\floor*{\mu n}} + \ceil*{(1-\delta)\floor*{\mu n}} = \floor*{\mu n},
\end{align*}
where (a) follows from $\rho\in(0,1-\frac{1}{q^2})$, and throughout we use the fact that $\mu=\frac{q}{q+1}\rho$. Combining all of the above we get,
\[ \mu n - 11 \leq w_A \leq \mu n.\]
Additionally,
\begin{align*}
d^{(2)}\parenv*{
\begin{bmatrix}
\ov_1 \\ \ov_2
\end{bmatrix},
\begin{bmatrix}
\ou_1 \\ \ou_2
\end{bmatrix}
} &= \floor*{\frac{\rho-\mu}{1-\mu}\parenv*{\delta n - \floor*{\delta\floor*{\mu n}}}}+\floor*{\delta\floor*{\mu n}}+\ceil*{(1-\delta)\floor*{\mu n}} \\
&\quad +\floor*{\frac{\rho-\mu}{1-\mu}\parenv*{(1-\delta)n - \ceil*{(1-\delta)\floor*{\mu n}}}} \\
&\leq \frac{\rho-\mu}{1-\mu}\parenv*{n-\floor*{\mu n}}+\floor*{\mu n} = \frac{\rho-\mu}{1-\mu}n + \frac{1-\rho}{1-\mu}\floor*{\mu n} \\
&\leq \frac{\rho-\mu}{1-\mu}n + \frac{1-\rho}{1-\mu}\mu n = \rho n.
\end{align*}
Hence, $\set{\ou_1,\ou_2}\supsetplus \bv$, and so $\set{\ou_1,\ou_2}\in \cA$.

We observe that if we account for the possibility of sometimes getting, $\ou_1=\ou_2$, and the possibility of getting twice the unordered pair $\set{\ou_1,\ou_2}$, by combining~\eqref{eq:comb1} and~\eqref{eq:comb2} we get
\begin{align*}
\abs*{\cA'}
& \geq \frac{1}{2} \cdot q^{n(H_q(\mu)+o(1))} \cdot \parenv*{q^{n\parenv*{\mu +(1-\mu)  H_q \parenv*{\frac{\rho-\mu}{1-\mu}}+o(1)}}-1} \\
&=q^{n\parenv*{H_q(\mu)+\mu +(1-\mu)  H_q \parenv*{\frac{\rho-\mu}{1-\mu}}+o(1)}},
\end{align*}
as claimed.
\qed
\end{proof}


Another technical result we shall need is the following entropy identity.

\begin{lemma}\label{lem:EntIdentity}
For any $\rho\in (0,1-\frac{1}{q^2})$ and $\mu=\frac{q}{q+1}\rho$,
\[ H_q(\mu) +\mu +(1-\mu)  H_q \parenv*{\frac{\rho-\mu}{1-\mu}} =2H_{q^2}(\rho).\] 
\end{lemma}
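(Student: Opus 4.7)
The plan is to verify the identity by direct computation, reducing everything to base-$q$ logarithms and exploiting the factorization $q^2-1 = (q-1)(q+1)$ together with the explicit value $\mu = \frac{q}{q+1}\rho$.

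First I would rewrite the right-hand side. Since $\log_{q^2}(x) = \tfrac{1}{2}\log_q(x)$, the definition of the $q^2$-ary entropy gives
\[
2H_{q^2}(\rho) = \rho\log_q(q^2-1) - \rho\log_q(\rho) - (1-\rho)\log_q(1-\rho),
\]
and I would split $\log_q(q^2-1) = \log_q(q-1) + \log_q(q+1)$.

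Next I would expand the left-hand side. Writing out $H_q(\mu)$ and $(1-\mu)H_q\bigl(\frac{\rho-\mu}{1-\mu}\bigr)$ and distributing $(1-\mu)$ inside the second term, I would collect the coefficients of $\log_q(q-1)$, $\log_q(1-\mu)$, $\log_q(\mu)$, $\log_q(\rho-\mu)$, and $\log_q(1-\rho)$. The $\log_q(q-1)$ coefficient is $\mu + (\rho-\mu) = \rho$, matching the $(q-1)$-part of the right-hand side; the $\log_q(1-\rho)$ coefficient is $-(1-\rho)$, matching the right-hand side; and the $\log_q(1-\mu)$ coefficient simplifies to $-(1-\mu) + (\rho-\mu) + (1-\rho) = 0$. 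This reduces the identity to
\[
\mu - \mu\log_q(\mu) - (\rho-\mu)\log_q(\rho-\mu) \;=\; \rho\log_q(q+1) - \rho\log_q(\rho).
\]

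The final step, which is the substantive one, is to use $\mu = \tfrac{q\rho}{q+1}$, so that $\rho-\mu = \tfrac{\rho}{q+1}$, and
\[
\log_q(\mu) = \log_q(\rho) + 1 - \log_q(q+1), \qquad \log_q(\rho-\mu) = \log_q(\rho) - \log_q(q+1).
\]
Plugging in, the $\log_q(\rho)$ coefficient on the left becomes $-\mu - (\rho-\mu) = -\rho$, the $\log_q(q+1)$ coefficient becomes $\mu + (\rho-\mu) = \rho$, and the leftover constants contribute $\mu - \mu = 0$. Matching these with the right-hand side closes the identity.

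I do not expect any real obstacle here; the only care needed is bookkeeping of the many $\log_q$ terms and making sure the cancellation of the $\log_q(1-\mu)$ pieces and the constant $+\mu$ is handled correctly, which is exactly where the hypothesis $\mu = \frac{q}{q+1}\rho$ is used (it is precisely the value that kills both the $\log_q(1-\mu)$ coefficient after cancellation and the lone constant term).
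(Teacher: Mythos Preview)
Your proof is correct and follows essentially the same approach as the paper's: a direct computation expanding both sides in base-$q$ logarithms and using $q^2-1=(q-1)(q+1)$. Your coefficient-matching organization is arguably cleaner than the paper's full substitute-and-expand, though one small inaccuracy in your commentary: the $\log_q(1-\mu)$ coefficient vanishes for \emph{any} $\mu$, not just $\mu=\tfrac{q}{q+1}\rho$; the specific choice of $\mu$ is only needed in the final reduced identity.
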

\begin{proof}
The proof is straightforward from the definition of the entropy function and the properties of the $\log$ function. For $\mu=\frac{q}{q+1}\rho$,
\begin{align*}
H_q&(\mu)+ \mu +(1-\mu)  H_q \parenv*{\frac{\rho-\mu}{1-\mu}}\\&=H_q\parenv*{\frac{q\rho}{q+1} }+\frac{q\rho}{q+1} +\parenv*{1-\frac{q\rho}{q+1}}  H_q \parenv*{\frac{\rho -\frac{q\rho}{q+1}}{1-\frac{q\rho}{q+1}}}\\
&=-\frac{q\rho}{q+1}\log_q\parenv*{\frac{q\rho}{q+1}}-\frac{q(1-\rho)+1}{q+1}\log_q\parenv*{\frac{q(1-\rho)+1}{q+1}}\\
&\qquad +\frac{q\rho}{q+1}\log_q(q-1)+\frac{q\rho}{q+1}+\parenv*{\frac{q(1-\rho)+1}{q+1}}H_q\parenv*{\frac{\rho}{q(1-\rho)+1}}\\
&=-\frac{q\rho}{q+1}\parenv*{-\log_q(\rho)-1+\log_q(q+1)+\log_q(q-1)}+ \frac{q\rho}{q+1}\\
&\qquad  -\frac{q(1-\rho)+1}{q+1}\parenv*{\log_q(q(1-\rho)+1)-\log_q(q+1)}\\
&\qquad +\frac{q(1-\rho)+1}{q+1}\parenv*{ -\frac{\rho}{q(1-\rho)+1}\log_q\parenv*{\parenv*{\frac{\rho}{q(1-\rho)+1}} +\log_q(q-1)} }\\
&\qquad - \frac{q(1-\rho)+1}{q+1}\parenv*{\frac{(q+1)(1-\rho)}{q(1-\rho)+1}\log_q\parenv*{\frac{(q+1)(1-\rho)}{q(1-\rho)+1}}}
\\&= \frac{q\rho}{q+1}\parenv*{-\log_q(\rho)+\log_q(q+1)+\log_q(q-1)}\\
&\qquad  -\frac{q(1-\rho)+1}{q+1}\parenv*{\log_q(q(1-\rho)+1)-\log_q(q+1)}\\
&\qquad +\frac{\rho}{q+1}\parenv*{\log_q(q-1)-\log_q(\rho)+\log_q(q(1-\rho)+1)}\\
&\qquad -(1-\rho)\parenv*{\log_q(q+1)+\log_q(1-\rho)-\log_q(q(1-\rho)+1)}\\
&=-\rho\log_q(\rho)-(1-\rho)\log_q(1-\rho)+\rho\log_q(q+1)+\rho\log_q(q-1)\\
&=2\parenv*{-\rho\log_{q^2}(\rho)-(1-\rho)\log_{q^2}(1-\rho)+\rho\log_{q^2}(q^2-1) }=2H_{q^2}(\rho).
\end{align*}
\qed
\end{proof}

\begin{lemma}\label{lem:EntIneq}
For any integer $q\geq 2$ and $\rho\in \parenv{0,1-\frac{1}{q^2}}$,
\[H_q\parenv*{\frac{q}{q+1}\rho }-H_{q^2}(\rho)> 0,\]
and in particular, the interval $\parenv{0,H_q\parenv{\frac{q}{q+1}\rho }-H_{q^2}(\rho)}$ is non-empty.
\end{lemma}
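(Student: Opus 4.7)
The plan is to study the single-variable function
$\phi(\rho) \eqdef H_q\parenv*{\frac{q}{q+1}\rho} - H_{q^2}(\rho)$
on the interval $[0, 1-\frac{1}{q^2}]$ and show $\phi(\rho) > 0$ on its open interior, which is exactly the claim. The approach combines two endpoint coincidences (at $\rho = 0$ and at $\rho^* \eqdef 1-\frac{1}{q^2}$) with an explicit sign analysis of $\phi''$ that splits the interval into a strictly concave piece and a strictly convex piece, after which positivity can be concluded on each piece separately.

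First I would verify the endpoint data. Clearly $\phi(0) = 0$. At $\rho^*$, the argument $\frac{q}{q+1}\rho^*$ simplifies to $\frac{q-1}{q}$, which is precisely the maximizer of $H_q$, while $\rho^*$ itself is the maximizer of $H_{q^2}$. Both maxima equal $1$, and since they are interior maxima we also have $H_q'\parenv*{\frac{q-1}{q}} = 0$ and $H_{q^2}'(\rho^*) = 0$. Together these give $\phi(\rho^*) = 0$ and $\phi'(\rho^*) = 0$, a ``double zero'' at $\rho^*$ that is the key symmetry to be exploited.

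Next I would differentiate twice. Starting from the elementary identity $H_q''(x) = -\frac{1}{\ln q \cdot x(1-x)}$ for $x \in (0,1)$, a short manipulation collects $\phi''(\rho)$ into a strictly positive prefactor multiplying the linear factor $1 - q(1-\rho)$. Hence $\phi$ is strictly concave on $(0, 1-\frac{1}{q})$ and strictly convex on $(1-\frac{1}{q}, \rho^*)$.

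Finally I would deduce positivity from these structural properties. On the convex region $[1-\frac{1}{q}, \rho^*]$, strict convexity together with $\phi(\rho^*) = \phi'(\rho^*) = 0$ forces $\phi$ to lie strictly above its tangent at $\rho^*$, which is the zero line; so $\phi > 0$ on $[1-\frac{1}{q}, \rho^*)$, and in particular $\phi(1-\frac{1}{q}) > 0$. On the concave region $[0, 1-\frac{1}{q}]$, strict concavity together with $\phi(0) = 0$ and the just-established $\phi(1-\frac{1}{q}) > 0$ places $\phi$ strictly above the chord joining $(0,0)$ to $(1-\frac{1}{q}, \phi(1-\frac{1}{q}))$, which is itself strictly positive on $(0, 1-\frac{1}{q})$. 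Gluing the two pieces yields $\phi(\rho) > 0$ on the open interval $(0, 1-\frac{1}{q^2})$, and the stated non-emptiness follows. I expect the main obstacle to be the algebraic simplification of $\phi''$ in the third step, where one has to keep careful track of the $\frac{1}{2\ln q}$ factor coming from $\ln(q^2)$ and of its interplay with the chain-rule factor $\frac{q}{q+1}$; once the clean factor $1 - q(1-\rho)$ is extracted, everything else is routine convexity/concavity reasoning.
\qed
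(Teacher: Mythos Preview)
Your argument is correct. You rely on the same raw ingredients as the paper---the endpoint coincidences $\phi(0)=\phi(\rho^*)=0$, the observation that $\phi'(\rho^*)=0$, and the fact that $\phi''$ changes sign exactly once at $\rho=1-\tfrac{1}{q}$---but you assemble them differently. The paper uses these facts to count roots of $\phi'$: from the single inflection point it bounds the number of critical points by two, then rules out the two-root scenario by a contradiction against the explicitly computed value $\phi'(\rho^*)=0$, and only then concludes positivity from the resulting ``increase then decrease'' shape. Your route is more geometric and avoids the contradiction step: on the convex right piece you use the tangent-line inequality at the double zero $\rho^*$ to get $\phi>0$ directly, and on the concave left piece you use the chord inequality between $0$ and $1-\tfrac{1}{q}$. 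This is shorter and conceptually cleaner; the only price is that one should be slightly careful at the junction $\rho=1-\tfrac{1}{q}$ (where $\phi''$ vanishes), but since $\phi$ is smooth past $\rho^*$ and $\phi''>0$ on the whole half-open interval $(1-\tfrac{1}{q},\rho^*]$, the tangent-line argument still yields $\phi(1-\tfrac{1}{q})>0$ (e.g., via the mean value theorem applied to $\phi'$ on $[1-\tfrac{1}{q},\rho^*]$), and the rest goes through exactly as you wrote.
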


\begin{proof}
Let us investigate the function
\[\varphi(\rho)\eqdef H_q\parenv*{\frac{q}{q+1}\rho }-H_{q^2}(\rho)\]
as a function of $\rho\in\sparenv{0,1-\frac{1}{q^2}}$. We start by observing that
\begin{align*}
\varphi(0)&=H_{q}(0)-H_{q^2}(0)=0-0=0,\\
\varphi\parenv*{1-\frac{1}{q^2}}&=H_q\parenv*{1-\frac{1}{q}}-H_{q^2}\parenv*{1-\frac{1}{q^2}}=1-1=0.
\end{align*}
Thus, since $\varphi$ is smooth in $\parenv{0,1-\frac{1}{q^2}}$, in order to prove that $\varphi$ is positive, it is sufficient to show that $\varphi$ is increasing in a neighborhood of $0$ and that its first derivative has exactly one root in $\parenv{0,1-\frac{1}{q^2}}$.

A straightforward calculation of the first and second derivatives shows that 
\begin{align*}
    \varphi'(\rho) &=\frac{q}{(q+1)\ln(q)}\parenv*{\ln(q-1)-\ln\parenv*{\frac{q}{q+1}\rho}+\ln\parenv*{1-\frac{q}{q+1}\rho}}\\
    &\quad + \frac{1}{\ln\parenv*{q^2}}\parenv*{\ln(\rho)-\ln(1-\rho)-\ln\parenv*{q^2-1}},\\
    \varphi''(\rho) &=
    \frac{\rho\parenv*{q\ln(q)-q\ln(q^2)}-(q+1)\ln(q)+q\ln(q^2)}{\rho(1-\rho)(q(\rho-1)-1)\ln(q)\ln(q^2)}.
\end{align*}
We note that 
\begin{align*}
    \lim_{\rho\to 0_+}\varphi'(\rho)&=\lim_{\rho\to 0_+}\ln(\rho)\parenv*{-\frac{q}{(q+1)\ln(q)}+\frac{1}{\ln(q^2)}}\\
    &=\lim_{\rho\to 0_+}\frac{\ln(\rho)}{\ln(q)}\parenv*{\frac{1}{2}-\frac{1}{1+\frac{1}{q}}}=\infty,
\end{align*}
since $q\geq 2$. This proves that $\varphi$ is increasing in a neighborhood of $0$. Since $\varphi(0)=0$ it also implies that $\varphi$ is positive in a neighborhood of $0$.

It now remains to prove that $\varphi$ has exactly one root in $\parenv{0,1-\frac{1}{q^2}}$. So far we have shown that $\varphi$ is smooth, positive in a neighborhood of $0$, and satisfies $\varphi(0)=\varphi(1-\frac{1}{q^2})=0$, which together imply that it has at least one local extremum in $\parenv{0,1-\frac{1}{q^2}}$. This proves that $\varphi'$ has at least one root in $\parenv{0,1-\frac{1}{q^2}}$. We note that $\varphi'$ is also smooth, and therefore the number of roots of $\varphi'$ is upper-bounded by $1$ plus the number of its local extrema.

We also observe that the equation $\varphi''(\rho)=0$ has exactly one solution, 
\[\rho_0=\frac{\ln(q)+q\ln(q)-\ln(q^2)}{q(\ln(q)-\ln(q^2))}=1-\frac{1}{q}\in\parenv*{0,1-\frac{1}{q^2}}.\]
In particular, the number of extrema of $\varphi'$ is at most one, and therefore $\varphi'$ has at most two roots. 

By now, we know that $\varphi'$ has least one root and at most two roots in $\parenv{0, 1-\frac{1}{q^2}}$. We assume to the contrary that $\varphi'$ has two roots in $\parenv{0,1-\frac{1}{q^2}}$. In that case, $1-\frac{1}{q}$ must be a local extremum. Furthermore, since $\lim_{\rho\to 0_+}\varphi'(\rho)=\infty$, $1-\frac{1}{q}$ has to be a local minimum,  $\varphi'$ must be decreasing in $\parenv{0,1-\frac{1}{q}}$ and increasing in $\parenv{1-\frac{1}{q},1-\frac{1}{q^2}}$. Let $\rho_1,\rho_2$ be the roots of $\varphi'$, $\rho_1<1-\frac{1}{q}<\rho_2$. Since $\varphi'$ is increasing in $\parenv{1-\frac{1}{q},1-\frac{1}{q^2}}$, we have that
\[\lim_{\rho\to \parenv*{1-\frac{1}{q^2}}_-}\varphi'(\rho)>0.\]
On the other hand, $\varphi'$ naturally (and continuously) extends to the interval $(0,1)$ and
\begin{align*}
    \lim_{\rho\to \parenv*{1-\frac{1}{q^2}}_-}\varphi'(\rho)&=\varphi'\parenv*{1-\frac{1}{q^2}}\\
&=\frac{q}{(q+1)\ln(q)}\parenv*{\ln(q-1)-\ln\parenv*{\frac{q-1}{q}}+\ln\parenv*{\frac{1}{q}}}\\
    &\quad + \frac{1}{\ln\parenv*{q^2}}\parenv*{\ln\parenv*{\frac{q^2-1}{q^2}}-\ln\parenv*{\frac{1}{q^2}}-\ln\parenv*{q^2-1}}=0.
\end{align*}
This brings us to a contradiction and therefore completes the proof.
\qed
\end{proof}

We now have all the technical lemmas needed to bound $\Prob[X_{\bv}=0]$.

\begin{proposition}
\label{prop:ProbBoundWord}
Let $\rho\in \parenv{0,1-\frac{1}{q^2}}$ and $\varepsilon \in (0,H_q\parenv{\frac{q}{q+1}\rho}-H_{q^2}(\rho))$ be fixed. Assume that $p=q^{-n(H_{q^2}(\rho)-\varepsilon)}$, $\begin{bsmatrix}
\ov_1\\ \ov_2
\end{bsmatrix}=\bv\in G_q^{2\times n}$. Then, 
\[\Prob[X_{\bv}=0]\leq \exp\parenv*{-q^{n(2\varepsilon+o(1))}},\]
where the $o(1)$ term does not depend on $\bv$.

\end{proposition}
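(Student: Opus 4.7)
The plan is to apply the Janson-type inequality of Theorem~\ref{thm:MosheBound}. Since $I_A=\chi_{\ou_1}\chi_{\ou_2}$ for $A=\{\ou_1,\ou_2\}$ and the $\chi_{\ow}$'s are i.i.d.\ $\mathrm{Ber}(p)$, every $A\in\cA$ has $p_A=p^2$. The bound therefore reduces to showing
\[
p^2\sum_{A\in\cA}\E\sparenv*{\frac{1}{X_A};I_A=1}\geq q^{n(2\varepsilon+o(1))}
\]
with the $o(1)$ uniform in $\bv$, after which exponentiating yields the claimed inequality.

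I would discard most of $\cA$ and sum only over the sub-collection $\cA'\subseteq\cA$ produced by Lemma~\ref{lem:MinDistCount} with the specific choice $\mu=\tfrac{q}{q+1}\rho$. That lemma delivers both a cardinality lower bound $\abs{\cA'}\geq q^{n(H_q(\mu)+\mu+(1-\mu)H_q(\frac{\rho-\mu}{1-\mu})+o(1))}$, which by Lemma~\ref{lem:EntIdentity} equals $q^{n(2H_{q^2}(\rho)+o(1))}$, and uniform control $\mu n-11\leq w_A\leq \mu n$ for every $A\in\cA'$. A short check shows that $\mu=\tfrac{q}{q+1}\rho$ lies in $(1-q(1-\rho),\rho]$ whenever $\rho<1-\tfrac{1}{q^2}$, so Corollary~\ref{cor:Ffunction} applies in its second branch and gives
\[
\E\sparenv*{\frac{1}{X_A};I_A=1}\geq \frac{1}{2}\cdot\frac{1-(1-p)^N}{pN},\qquad N=q^{n(f(\mu)+o(1))},
\]
with $f(\mu)=\mu+(1-\mu)H_q(\tfrac{\rho-\mu}{1-\mu})=2H_{q^2}(\rho)-H_q(\mu)$, again by Lemma~\ref{lem:EntIdentity}.

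The crux of the argument is that with $p=q^{-n(H_{q^2}(\rho)-\varepsilon)}$ we have $pN=q^{n(H_{q^2}(\rho)-H_q(\mu)+\varepsilon+o(1))}$, and this tends to $0$: this is precisely where the hypothesis $\varepsilon<H_q(\tfrac{q}{q+1}\rho)-H_{q^2}(\rho)$ (the interval being non-empty by Lemma~\ref{lem:EntIneq}) is used. A one-line Taylor expansion of $1-(1-p)^N$ as $pN\to 0$ then yields $\tfrac{1-(1-p)^N}{pN}=1+o(1)$, hence $\E\sparenv*{\tfrac{1}{X_A};I_A=1}\geq\tfrac{1}{4}$ for all sufficiently large $n$, uniformly in $\bv$ and in $A\in\cA'$. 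Combining this with the cardinality bound yields
\[
p^2\sum_{A\in\cA}\E\sparenv*{\frac{1}{X_A};I_A=1}\geq \frac{1}{4}\,q^{-2n(H_{q^2}(\rho)-\varepsilon)}\cdot q^{n(2H_{q^2}(\rho)+o(1))}=q^{n(2\varepsilon+o(1))},
\]
and feeding this back into Janson produces the claim. The main obstacle is the delicate balance at the step $pN\to 0$: a larger $\varepsilon$ would send $pN$ to infinity and collapse the inverse-binomial moment to order $1/(pN)$, destroying the gain, while a different choice of $\mu$ would lose the exponent-matching identity of Lemma~\ref{lem:EntIdentity}; the value $\mu=\tfrac{q}{q+1}\rho$ is essentially the unique one that extracts the maximal contribution from $\cA'$ at the threshold $p=q^{-n(H_{q^2}(\rho)-\varepsilon)}$, and this is what forces the precise exponent $2\varepsilon$ in the final bound.
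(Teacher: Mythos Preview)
Your proposal is correct and follows essentially the same route as the paper: restrict to the sub-collection $\cA'$ from Lemma~\ref{lem:MinDistCount} with $\mu=\tfrac{q}{q+1}\rho$, invoke Corollary~\ref{cor:Ffunction}, use Lemma~\ref{lem:EntIdentity} to recognize $f(\mu)=2H_{q^2}(\rho)-H_q(\mu)$ and $\abs{\cA'}\geq q^{n(2H_{q^2}(\rho)+o(1))}$, and then exploit $pN\to 0$ (guaranteed by the hypothesis on $\varepsilon$ via Lemma~\ref{lem:EntIneq}) to conclude that the inverse-binomial moment is bounded below by a constant. The only cosmetic difference is that where the paper uses Bernoulli's inequality $(1-p)^N\leq\tfrac{1}{1+pN}$ to lower-bound $\tfrac{1-(1-p)^N}{pN}$ by $\tfrac{1}{1+pN}=1+o(1)$, you obtain the same conclusion via the asymptotic expansion $1-(1-p)^N=pN(1+o(1))$ as $pN\to 0$.
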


\begin{proof}
The components of the proof of the statement are the Janson-type inequality from Theorem~\ref{thm:MosheBound}, the tight estimations on $\E\sparenv{X_{A}^{-1} ; I_A=1}$ given in Lemma~\ref{lem:InverseBinomial}, and the combinatorial argument given in Lemma~\ref{lem:MinDistCount}. By Lemma~\ref{lem:MinDistCount}, for $\mu=\frac{q}{q+q}\rho$, there exists a subset $\cA'\subseteq \cA$ such that
\begin{equation}
    \label{eq:A'Size}
    \abs*{\cA'} \geq q^{n\parenv*{H_q(\mu)+\mu +(1-\mu)  H_q \parenv*{\frac{\rho-\mu}{1-\mu}}+o(1)}},
\end{equation} and for any $A\in \cA'$,  \begin{equation}\label{eq:w_Asize}
    \mu n - 11 \leq w_A \leq \mu n,
\end{equation} where the $o(1)$ term only depends on $\rho$ and $q$. Using Corollary~\ref{cor:Ffunction} and the fact that all summands in~\eqref{eq:JansonB} are non-negative, we obtain 
\begin{align*}
    \Prob\sparenv*{X_{\bv}=0}&\leq \exp\parenv*{-\sum_{A\in \cA}p_A \E\sparenv*{\frac{1}{X_A} ; I_A=1}}\\
    &= \exp\parenv*{-\sum_{A\in \cA}p^2 \E\sparenv*{\frac{1}{X_A} ; I_A=1}}\\
    &\leq  \exp\parenv*{-\sum_{A\in \cA
'}\frac{1}{2}\cdot p^2 \E\sparenv*{\frac{1}{X_A} ; I_A=1}}\\
    &= \exp\parenv*{-\sum_{A\in \cA
'} \frac{1}{2}\cdot p^2 \frac{1-(1-p)^{q^{n(f(w_A/n)+\frac{3}{n})}}}{p\cdot q^{n(f(w_A/n)+\frac{3}{n})}}}.
\end{align*}
Combining with \eqref{eq:w_Asize},
\begin{equation}
    \label{eq:onesummand}
\Prob\sparenv*{X_{\bv}=0} \leq \exp\parenv*{-\frac{1}{2}\sum_{A\in \cA
'} p^2 \frac{1-(1-p)^{q^{n(f(\mu+o(1))+o(1))}}}{p\cdot q^{n(f(\mu+o(1))+o(1))}}}
\end{equation}
Since $f(\cdot)$ is continuous, we have $f(\mu+o(1)) = f(\mu)+o(1)$, and the addition of $o(1)$ is easily absorbed in the $o(1)$ that already appears due to Corollary~\ref{cor:Ffunction}, still only depending on $q$ and $\rho$.

We begin our simplification of~\eqref{eq:onesummand} by using the well known Bernoulli's inequality: for $y>0$ and $x\in[-1,\frac{1}{y})$,
\[(1+x)^y\leq \frac{1}{1-xy}.\]
We use Bernoulli's inequality with $y=q^{n(f(\mu)+o(1))}>0$ and $x=-p\in [-1,0]$, to get
\begin{align*}
    (1-p)^{q^{n(f(\mu)+o(1))}}\leq \frac{1}{1+pq^{n(f(\mu)+o(1))}}.
\end{align*}
Therefore 
\begin{align}
      p \frac{1-(1-p)^{q^{n(f(\mu)+o(1))}}}{ q^{n(f(\mu)+o(1))}} & \geq   p \frac{1-\frac{1}{1+pq^{n(f(\mu)+o(1))}}}{ q^{n(f(\mu)+o(1))}} \nonumber \\
    & =  p \frac{pq^{n(f(\mu)+o(1))}}{ q^{n(f(\mu)+o(1))}\cdot \parenv*{1+pq^{n(f(\mu)+o(1))}}} \nonumber \\
    &=  \frac{p^2}{  1+pq^{n(f(\mu)+o(1))}}. \label{eq:cont1}
\end{align}
Since $\mu=\frac{q}{q+1}\rho$, we observe that for any value of $\rho\in (0,1-\frac{1}{q^2})$, have $\mu\in(1-q(1-\rho),\rho]$.
Recalling the definition of $f(\mu)$ Corollary~\ref{cor:Ffunction}, we have that 
\begin{align*}
1+pq^{n(f(\mu)+o(1))}&=
1+q^{n\parenv*{\mu+(1-\mu)H_q\parenv*{\frac{\rho-\mu}{1-\mu}}-H_{q^2}(\rho)+\varepsilon+o(1)}} \\
    &=1+q^{n\parenv*{H_{q^2}(\rho)-H_q\parenv*{\frac{q}{q+1}\rho}+\varepsilon+o(1)}},
\end{align*}
where the last equality follows from Lemma~\ref{lem:EntIdentity}.
We recall that by Lemma~\ref{lem:EntIneq} for any $\rho\in \parenv{0,1-\frac{1}{q^2}}$ 
\[ H_{q^2}(\rho)-H_q\parenv*{\frac{q}{q+1}\rho}<0,\]
which together with the assumption that $\varepsilon\in\parenv{0,H_q\parenv{\frac{q}{q+1}\rho}-H_{q^2}(\rho)}$ implies that 
\[
    1+pq^{n(f(\mu)+o(1))}=1+q^{n\parenv*{H_{q^2}(\rho)-H_q\parenv*{\frac{q}{q+1}\rho}+\varepsilon+o(1)}}
    =1+o(1).
\]
Continuing~\eqref{eq:cont1} and using \eqref{eq:A'Size} as well as Lemma~\ref{lem:EntIdentity}, we now conclude that for $\mu=\frac{q}{q+1}\rho$
\begin{align*}
      \sum_{A\in\cA'} p \frac{1-(1-p)^{q^{n(f(\mu)+o(1))}}}{ q^{n(f(\mu)+o(1))}} &\geq   \abs{\cA'}p^2(1+o(1))\\
    &= \abs{\cA'}q^{n(-2H_{q^2}(\rho)+2\varepsilon+o(1))}\\
    &\geq q^{n\parenv*{H_q(\mu)+\mu+(1-\mu)H_q\parenv*{\frac{\rho-\mu}{1-\mu}}-2H_{q^2}(\rho)+2\varepsilon+o(1)}}\\
    &=q^{n(2\varepsilon+o(1))}.
\end{align*}

Combining all the parts of the proof together, for all sufficiently large $n$, \eqref{eq:onesummand} gives
\begin{align*}
    \Prob\sparenv*{X_{\bv}=0}& \leq \exp\parenv*{-\frac{1}{2}\sum_{A\in A'}  p^2 \frac{1-(1-p)^{q^{n(f(\mu)+o(1))}}}{p\cdot q^{n(f(\mu)+o(1))}}} \leq \exp\parenv*{-q^{n\parenv*{2\varepsilon+o(1)}}},
 \end{align*}
where the $o(1)$ term is induced from $o(1)$ terms all along the way, depending only on $\rho,\varepsilon$ and $q$. 
\qed
\end{proof}

The last components for the proof of Theorem~\ref{th:BinRatecovering} are the following concentration inequality for binomial random variables, and a proposition that shows that, with high probability, the cardinality of the random code $C$ is close to its expected value of $q^{n(1-H_{q^2}(\rho)+\varepsilon)}$.

\begin{lemma}[{{\cite[Theorem 1]{janson2016large}}}]
\label{lem:BinBound}
Let $X\sim\mathrm{Bin}(n,p)$ be a binomial random variable. Then for every real $a>0$
\[\Prob[X\geq \E[X]+a]\leq \exp\parenv*{-\frac{a^2}{2np}\parenv*{1-\frac{a}{3np}}}.\]
\end{lemma}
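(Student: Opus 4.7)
The plan is to follow the classical Chernoff approach with a Bennett-type refinement. First, I would write $X=\sum_{i=1}^{n}X_{i}$ as a sum of i.i.d.\ $\mathrm{Ber}(p)$ variables, whose moment generating function is $\E[e^{tX}]=(1-p+pe^{t})^{n}$. For any $t>0$, Markov's inequality applied to $e^{tX}$ gives
\[
\Prob[X\geq np+a]\leq e^{-t(np+a)}(1-p+pe^{t})^{n}.
\]

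Next, using $1+x\leq e^{x}$ with $x=p(e^{t}-1)$, the factor $(1-p+pe^{t})^{n}$ is bounded by $\exp(np(e^{t}-1))$, so
\[
\Prob[X\geq np+a]\leq \exp\bigl(np(e^{t}-1)-t(np+a)\bigr).
\]
I would then optimize over $t>0$; setting the derivative in $t$ to zero yields $t^{*}=\ln(1+a/(np))$, and substituting back produces the Bennett form
\[
\Prob[X\geq np+a]\leq \exp\bigl(-np\cdot h(a/(np))\bigr), \qquad h(u)\eqdef (1+u)\ln(1+u)-u.
\]

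The remaining and only nontrivial step is to bound $h$ from below by $g(u)\eqdef \frac{u^{2}}{2}(1-u/3)$ on $[0,\infty)$. The plan is a second-derivative comparison: one checks $h(0)=g(0)$, $h'(0)=g'(0)=0$, and
\[
h''(u)-g''(u)=\frac{1}{1+u}-(1-u)=\frac{u^{2}}{1+u}\geq 0 \qquad \text{for } u\geq 0,
\]
which, after integrating twice, gives $h\geq g$ throughout $[0,\infty)$. Substituting $u=a/(np)$ into $h(u)\geq g(u)$ and multiplying by $np$ produces the claimed bound.

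The main obstacle is the pointwise comparison of $h$ with the polynomial $g$; once that is in hand, the remainder consists of standard Chernoff-type manipulations. Observe also that whenever $a>3np$ the right-hand side of the stated inequality exceeds $1$, making the statement vacuous, so only the range $a\leq 3np$ is genuinely informative and precisely the range where the $h\geq g$ estimate is tight enough to be useful.
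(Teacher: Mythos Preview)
Your argument is correct and is the standard Bennett--Chernoff derivation of this inequality. Note, however, that the paper does not give its own proof of this lemma: it is quoted verbatim as \cite[Theorem~1]{janson2016large} and used as a black box, so there is no ``paper's proof'' to compare against. Your write-up could therefore serve as a self-contained replacement for the citation; the second-derivative comparison $h''(u)-g''(u)=u^{2}/(1+u)\geq 0$ is exactly the clean way to obtain the polynomial lower bound on $h$, and your closing remark that the bound is only informative for $a\leq 3np$ is a useful sanity check that the paper omits.
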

 
From Lemma~\ref{lem:BinBound} it follows that if $X\sim\mathrm{Bin}(n,p)$, and $\gamma>0$, then 
\begin{equation}
     \label{eq:BinBound}\Prob[X\geq \E[X](1+\gamma)] = \Prob[X\geq np(1+\gamma)] \leq \exp\parenv*{-\frac{1}{2}\gamma^2 np\parenv*{1-\frac{\gamma}{3}}}.
\end{equation}

\begin{proposition}
\label{prop:RandomCodingGood}
Under the assumptions of Proposition~\ref{prop:ProbBoundWord}, for
all sufficiently large $n$,
\[ \Prob\sparenv*{\set*{R_2(C)\leq \rho n}\cap \set*{\abs{C}< q^{n(1-H_{q^2}(\rho) +\varepsilon+1/n)}}}\geq 1-\exp\parenv*{-q^{n(2\varepsilon+o(1))}}.\]
\end{proposition}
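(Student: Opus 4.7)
The plan is to decompose the complement of the target event into two ``bad'' events and bound each separately. Let
\[ B_1\eqdef \set*{R_2(C)>\rho n},\qquad B_2\eqdef \set*{\abs*{C}\geq q^{n(1-H_{q^2}(\rho)+\varepsilon+1/n)}},\]
so that the asserted probability equals $1-\Prob[B_1\cup B_2]$. I will show that both $\Prob[B_1]$ and $\Prob[B_2]$ are at most $\exp(-q^{n(2\varepsilon+o(1))})$ and finish by a union bound.

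For $B_1$, I use the observation (made right after the definition of $X_{\bv}$) that whenever $X_{\bv}>0$ the matrix $\bv$ is $2$-covered by some element of $C^2$. Hence
\[\set*{R_2(C)>\rho n}\subseteq \bigcup_{\bv\in G_q^{2\times n}}\set*{X_{\bv}=0},\]
and a union bound combined with Proposition~\ref{prop:ProbBoundWord} (whose estimate is uniform in $\bv$) gives
\[\Prob[B_1]\leq q^{2n}\exp\parenv*{-q^{n(2\varepsilon+o(1))}}.\]
The simply-exponential prefactor $q^{2n}$ is absorbed into the $o(1)$ inside the doubly-exponential term, since $\log(q^{2n}\exp(-q^{n(2\varepsilon+o(1))}))=2n\log q-q^{n(2\varepsilon+o(1))}=-q^{n(2\varepsilon+o(1))}$ for all sufficiently large $n$.

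For $B_2$, note that $\abs*{C}=\sum_{\ov\in G_q^n}\chi_{\ov}\sim\mathrm{Bin}(q^n,p)$ with $\E[\abs*{C}]=q^np=q^{n(1-H_{q^2}(\rho)+\varepsilon)}$, and since $q\geq 2$ the threshold satisfies $q^{n(1-H_{q^2}(\rho)+\varepsilon+1/n)}=q\cdot \E[\abs*{C}]\geq 2\E[\abs*{C}]$. Applying the binomial tail bound~\eqref{eq:BinBound} with $\gamma=1$ yields
\[\Prob[B_2]\leq \Prob[\abs*{C}\geq 2\E[\abs*{C}]]\leq \exp\parenv*{-\tfrac{1}{3}q^{n(1-H_{q^2}(\rho)+\varepsilon)}}.\]
By Lemma~\ref{lem:EntIneq} we have $\varepsilon<H_q(\tfrac{q}{q+1}\rho)-H_{q^2}(\rho)\leq 1-H_{q^2}(\rho)$, so $1-H_{q^2}(\rho)+\varepsilon>2\varepsilon$ and this bound is asymptotically no worse than the one on $\Prob[B_1]$.

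A final union bound gives $\Prob[B_1\cup B_2]\leq\Prob[B_1]+\Prob[B_2]\leq\exp(-q^{n(2\varepsilon+o(1))})$, which is the claim. No genuine obstacle appears, since the heavy analytical work is already done in Proposition~\ref{prop:ProbBoundWord}; the only mildly delicate point is verifying that the simply-exponential union-bound factor $q^{2n}$ in the estimate of $\Prob[B_1]$ is swallowed by the doubly-exponential smallness, which is the log-computation displayed above.
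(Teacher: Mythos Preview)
Your proposal is correct and follows essentially the same approach as the paper: decompose the complement into the two bad events, bound $\Prob[B_1]$ by a union bound over $G_q^{2\times n}$ using Proposition~\ref{prop:ProbBoundWord} and absorb the $q^{2n}$ factor, bound $\Prob[B_2]$ via the binomial tail estimate~\eqref{eq:BinBound} with $\gamma=1$, and compare exponents using $\varepsilon<H_q(\tfrac{q}{q+1}\rho)-H_{q^2}(\rho)\leq 1-H_{q^2}(\rho)$. The only minor remark is that the last inequality follows simply from $H_q(\cdot)\leq 1$ and does not require Lemma~\ref{lem:EntIneq}.
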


\begin{proof}
From Proposition~\ref{prop:ProbBoundWord}, for any matrix $\bv\in G_q^{2\times n}$, 
\[\Prob[X_{\bv}=0]\leq \exp\parenv*{-q^{n(2\varepsilon+o(1))}}.\]
Applying the union bound we obtain
\begin{align*}
    \Prob[R_2(C)>\rho n]&=\Prob\sparenv*{\bigcup_{\bv\in G_q^{2\times n}}\set*{X_{\bv}=0}} \leq \sum_{\bv\in G_q^{2\times n}}\Prob\sparenv*{X_{\bv}=0} \\
    &\overset{(a)}{\leq} q^{2n}\exp\parenv*{-q^{n(2\varepsilon+o(1))}}=\exp\parenv*{-q^{n(2\varepsilon+o(1))}},
\end{align*}
where (a) follows by the fact that the $o(1)$ term in the upper-bound on $\Prob[X_{\bv}=0]$ does not depend on the choice of $\bv$, as described in Proposition~\ref{prop:ProbBoundWord}.

For a bound on the probability of the second event, we use~\eqref{eq:BinBound}, and the fact that the cardinality of our random code $C$ has a $\mathrm{Bin}(q^n,q^{-n(H_{q^2}(\rho) -\varepsilon)})$ distribution:
\begin{align*}
    \Prob\sparenv*{\abs{C}\geq  q^{n(1-H_{q^2}(\rho) +\varepsilon+1/n)}}&\leq\Prob\sparenv*{\abs{C}\geq 2 \E\sparenv*{\abs*{C}}}\\
    &\leq \exp\parenv*{-\frac{1}{3}q^{n(1-H_{q^2}(\rho)+\varepsilon)}} \\
    &=\exp\parenv*{-q^{n(1-H_{q^2}(\rho)+\varepsilon+o(1))}}.
\end{align*}
Hence,
\begin{align*}
    &\Prob\sparenv*{\set*{R_2(C)\leq \rho n}\cap \set*{\abs{C}< q^{n(1-H_{q^2}(\rho) +\varepsilon+1/n)}}}\\
    &\quad =1-\Prob\sparenv*{\set*{R_2(C)> \rho n}\cup \set*{\abs{C}\geq  q^{n(1-H_{q^2}(\rho) +\varepsilon+1/n)}}}\\
    &\quad \geq 1-\Prob\sparenv*{R_2(C)> \rho n}-\Prob\sparenv*{ \abs{C}\geq  q^{n(1-H_{q^2}(\rho) +\varepsilon+1/n)}}\\
    &\quad \geq  1
    - \exp\parenv*{-q^{n(2\varepsilon+o(1))}}
    - \exp\parenv*{-q^{n(1-H_{q^2}(\rho)+\varepsilon+o(1))}} \\
    &\quad \geq 1-\exp\parenv*{-q^{n(2\varepsilon+o(1))}},
\end{align*}
where the last inequality follows, for all sufficiently large $n$, from the fact that $\varepsilon < H_q\parenv{\frac{q}{q+1}\rho}-H_{q^2}(\rho) \leq 1-H_{q^2}(\rho)$.
\qed
\end{proof}

The proof of Theorem~\ref{th:BinRatecovering} now easily follows.

\begin{proof}[Theorem~\ref{th:BinRatecovering}]
As explained in the beginning of Section~\ref{sec:SecondOreder}, the only remaining part we need to prove is that $\kappa_2(\rho,q)\leq 1-H_{q^2}(\rho)$ for all $\rho\in(0,1-\frac{1}{q^2})$. Let  $\rho\in(0,1-\frac{1}{q^2}))$ and $\varepsilon\in\parenv{0,H_{q}\parenv{\frac{q}{q+1}\rho}-H_{q^2}(\rho)}$ be fixed. By Proposition~\ref{prop:RandomCodingGood}, for all sufficiently large $n$, the random code $C$ satisfies
\[ \Prob\sparenv*{\set*{R_2(C)\leq \rho n}\cap \set*{\abs{C'}< q^{n(1-H_{q^2}(\rho) +\varepsilon+1/n)}}}>0.\]
In particular, there exists at least one (deterministic) code $C_n$ such that 
\[ R_2(C_n)\leq \rho n,\quad  \text{and}\quad \abs{C_n}< q^{n(1-H_{q^2}(\rho) +\varepsilon+1/n)}. \]
This immediately implies that,
\[ k_2(n,\rho n,q)\leq q^{n(1-H_{q^2}(\rho) +\varepsilon+1/n)}=q^{n(1-H_{q^2}(\rho) +\varepsilon+o(1))}. \]
This proves that
\begin{align*}
    \kappa_2(\rho,q)&=\liminf_{n\to \infty}\frac{1}{n}\log_q( k_2(n,\rho n,q))\\
    &\leq \liminf_{n\to \infty} \parenv*{1-H_{q^2}(\rho) +\varepsilon+o(1)}=1-H_{q^2}(\rho) +\varepsilon.
\end{align*}
Taking $\varepsilon\to 0$ we conclude 
\[\kappa_2(\rho,q)\leq 1-H_{q^2}(\rho).\]
\qed
\end{proof}

While for the proof of the upper-bound part of Theorem~\ref{th:BinRatecovering} it is only required to show the existence of second-order covering codes, we observe that a stronger conclusion may follow, namely, that with high probability, a random code generated according to our distribution is a second-order covering code. We use this fact in order to prove that the fraction of second-order covering codes (among the set of codes of sufficiently large size) tends to $1$ as $n\to\infty$.

For an integer $q\geq 2$, $\rho\in(0,1-\frac{1}{q^2})$, $n\in \N$, and $0\leq M \leq q^n$, let  $\alpha_q(n,\rho,M)$ denote the fraction of codes of length $n$ over $G_q$ with second covering radius at most $\rho n$ in the set of $\parenv*{n,M}_q$ codes. Namely,
\[\alpha_q(n,\rho,M)\eqdef\frac{\abs*{\cC_q(n,\rho,M)}}{\binom{q^n}{M}},\]
where 
\[ \cC_q(n,\rho,M)\eqdef \set*{C\subseteq  G_q^n; R_2(C)\leq \rho n, \abs*{C}=M}.\]

\begin{lemma}\label{lem:FracMonotonic}
Let $\rho$ and $n$ be fixed. For any $0\leq M\leq q^n-1$,
\[\alpha_q(n,\rho,M)\leq \alpha_q(n,\rho,M+1). \]
\end{lemma}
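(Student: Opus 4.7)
The plan is a straightforward double-counting argument on the bipartite graph whose edges are inclusions $C \subset C'$ with $\abs{C}=M$ and $\abs{C'}=M+1$. The key observation is that the second-order covering radius is monotone non-increasing under inclusion: if $C\subseteq C'$, then $C^2\subseteq (C')^2$, so for every $\bu\in G_q^{2\times n}$ the minimum $\min_{\bc\in (C')^2}d^{(2)}(\bc,\bu)$ can only be smaller than (or equal to) $\min_{\bc\in C^2}d^{(2)}(\bc,\bu)$. Consequently $R_2(C')\leq R_2(C)$, and in particular, if $C\in\cC_q(n,\rho,M)$ and $\ov\in G_q^n\setminus C$, then $C\cup\set{\ov}\in \cC_q(n,\rho,M+1)$.

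Next, I would count pairs $(C,C')$ with $C\in\cC_q(n,\rho,M)$, $C'\in\cC_q(n,\rho,M+1)$, and $C\subset C'$ in two ways. From the first side, every $C\in\cC_q(n,\rho,M)$ can be extended by any single $\ov\in G_q^n\setminus C$, yielding exactly $q^n-M$ valid super-codes $C'$ in $\cC_q(n,\rho,M+1)$, by the monotonicity observation above. From the second side, any $C'\in\cC_q(n,\rho,M+1)$ has only $M+1$ subsets of size $M$, so it contributes at most $M+1$ pairs. This yields
\[
\abs*{\cC_q(n,\rho,M)}\cdot (q^n-M) \;\leq\; \abs*{\cC_q(n,\rho,M+1)}\cdot (M+1).
\]

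Finally, dividing by $\binom{q^n}{M}(q^n-M) = \binom{q^n}{M+1}(M+1)$ converts this inequality directly into the claimed inequality between $\alpha_q(n,\rho,M)$ and $\alpha_q(n,\rho,M+1)$. There is no real obstacle here; the only point that requires a brief justification is the inclusion-monotonicity of $R_2$, which is immediate from Definition of $C^t$ and the $t$-th covering radius, as the minimum in the definition of $R_2$ is taken over the larger set $(C')^2\supseteq C^2$.
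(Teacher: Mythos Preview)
Your proof is correct and is essentially the same double-counting argument the paper gives: extend each good $M$-code in $q^n-M$ ways, observe each good $(M{+}1)$-code has at most $M{+}1$ size-$M$ subcodes, and divide by the binomial identity. The only difference is cosmetic---you make the monotonicity $R_2(C')\leq R_2(C)$ for $C\subseteq C'$ explicit, whereas the paper states it in passing.
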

\begin{proof}
We start by observing that any $(n,M+1)_q$ code $C$ that contains a sub-code in $\cC_q(n,\rho,M)$  satisfies $C\in\cC_q(n,\rho,M+1) $. This immediately gives a lower bound on $|\cC_q(n,\rho,M+1)|$
\begin{equation}\label{eq:lemFraction1}
    |\cC_q(n,\rho,M+1)|\geq  \abs*{\set*{C\subseteq  G_q^{n} ; \exists C'\subseteq C, C'\in \cC_q(n,\rho,M) }}. 
\end{equation}
We note that any code in $\cC_q(n,\rho,M)$ can be extended to an $(n,M+1)_q$ code by adding one of the $q^n-M$ remaining vectors in $ G_q^n$. We also note that any code obtained by adding a vector to a code in $\cC_q(n,\rho,M)$ has $M+1$ subcodes of size $M$. Therefore, any such code may be the extension of at most $M+1$ codes in $\cC_q(n,\rho,M)$, which implies that 
 \begin{equation} \label{eq:lemFraction2}
     \abs*{\set*{C\subseteq  G_q^{n} ; \exists C'\subseteq C, C'\in \cC_q(n,\rho,M) }}\geq\abs{\cC_q(n,\rho,M)}\frac{q^n-M}{M+1}.
 \end{equation}
Using~\eqref{eq:lemFraction1} and~\eqref{eq:lemFraction2} we conclude that
\[
\frac{\alpha_q(n,\rho,M+1)}{\alpha_q(n,\rho,M)}=\frac{\abs*{\cC_q(n,\rho,M+1)}}{\binom{q^n}{M+1}}\cdot \frac{\binom{q^n}{M}}{\abs*{\cC_q(n,\rho,M)}}
\geq \frac{q^n-M}{M+1}\cdot\frac{\binom{q^n}{M}}{\binom{q^n}{M+1}}=1.
\]
\qed
\end{proof}

The following theorem asserts that the fraction of second-order covering codes tends to $1$ as the length tends to infinity when we consider codes with rate larger then the optimal rate presented in Theorem~\ref{th:BinRatecovering} by an arbitrarily small amount. 

\begin{theorem} \label{th:FractionCovering}
For any $\rho\in(0,1-\frac{1}{q^2})$ and $\varepsilon>0$, let us denote $M(n,\rho,\varepsilon)\eqdef\floor{q^{n(1-H_{q^2}(\rho)+\varepsilon+1/n)}}$. Then,
\[\lim_{n\to\infty}\alpha_q(n,\rho,M(n,\rho,\varepsilon))=1.\]
\end{theorem}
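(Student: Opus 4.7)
The plan is to derive Theorem~\ref{th:FractionCovering} essentially as a corollary of Proposition~\ref{prop:RandomCodingGood} and Lemma~\ref{lem:FracMonotonic}. The key observation is that the random code $C$ constructed in Section~\ref{sec:SecondOreder} is obtained by placing each vector of $G_q^n$ in $C$ independently with the same probability $p$; hence, conditional on $\abs{C}=M'$, the code $C$ is uniformly distributed on all $(n,M')_q$ codes. In particular, $\Prob\sparenv*{R_2(C)\leq \rho n ; \abs{C}=M'}=\alpha_q(n,\rho,M')$.

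First, I would reduce to the case where $\varepsilon$ satisfies the hypothesis of Proposition~\ref{prop:RandomCodingGood}, i.e., $\varepsilon\in \parenv*{0,H_q\parenv*{\frac{q}{q+1}\rho}-H_{q^2}(\rho)}$, an interval that is non-empty by Lemma~\ref{lem:EntIneq}. If the given $\varepsilon$ exceeds the right endpoint, choose a smaller $\varepsilon'$ in the admissible range; since $M(n,\rho,\varepsilon)\geq M(n,\rho,\varepsilon')$ for all sufficiently large $n$, Lemma~\ref{lem:FracMonotonic} gives $\alpha_q(n,\rho,M(n,\rho,\varepsilon))\geq \alpha_q(n,\rho,M(n,\rho,\varepsilon'))$, reducing the claim to the case of $\varepsilon'$.

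Next, take $p=q^{-n(H_{q^2}(\rho)-\varepsilon)}$ and invoke Proposition~\ref{prop:RandomCodingGood}. Since $\abs{C}<q^{n(1-H_{q^2}(\rho)+\varepsilon+1/n)}$ forces $\abs{C}\leq M(n,\rho,\varepsilon)$, partitioning the joint event by the possible values of $\abs{C}$ and using the conditional uniformity observed above gives
\[\sum_{M'=0}^{M(n,\rho,\varepsilon)}\Prob\sparenv*{\abs{C}=M'}\cdot \alpha_q(n,\rho,M')\geq 1-\exp\parenv*{-q^{n(2\varepsilon+o(1))}}.\]
By Lemma~\ref{lem:FracMonotonic}, each $\alpha_q(n,\rho,M')$ appearing in the sum is bounded above by $\alpha_q(n,\rho,M(n,\rho,\varepsilon))$. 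Factoring this quantity out and using the trivial bound $\Prob\sparenv*{\abs{C}\leq M(n,\rho,\varepsilon)}\leq 1$ yields
\[\alpha_q(n,\rho,M(n,\rho,\varepsilon))\geq 1-\exp\parenv*{-q^{n(2\varepsilon+o(1))}},\]
which tends to $1$ as $n\to\infty$.

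The argument is essentially bookkeeping, repackaging Proposition~\ref{prop:RandomCodingGood} via the uniform conditional distribution of Bernoulli codes and the monotonicity of $\alpha_q$ in $M$. I do not anticipate a substantial obstacle; the only mild subtleties are the reduction of $\varepsilon$ to the admissible range required by Proposition~\ref{prop:RandomCodingGood} and the standard verification that Bernoulli product measure, conditioned on the total count, induces the uniform distribution on subsets of that size.
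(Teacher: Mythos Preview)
Your proposal is correct and follows essentially the same route as the paper: reduce $\varepsilon$ to the admissible range via Lemma~\ref{lem:FracMonotonic}, invoke Proposition~\ref{prop:RandomCodingGood}, use the conditional uniformity of the Bernoulli random code given its size to identify $\Prob\sparenv*{R_2(C)\leq \rho n \mid \abs{C}=M'}$ with $\alpha_q(n,\rho,M')$, and then apply monotonicity once more to pull out $\alpha_q(n,\rho,M(n,\rho,\varepsilon))$ from the resulting sum. This is exactly the paper's argument.
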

\begin{proof}
We start by observing that by the monotonicity property given in Lemma~\ref{lem:FracMonotonic}, it is sufficient to prove the claim for any $\varepsilon\in \parenv{0,H_2\parenv{\frac{q}{q+1}\rho}-H_{q^2}(\rho)}$. Thus, let as assume that $\varepsilon\in \parenv{0,H_2\parenv{\frac{q}{q+1}\rho}-H_{q^2}(\rho)}$ and let $C$ be the random code as in Proposition~\ref{prop:RandomCodingGood}, where it is shown that for all sufficiently large $n$,
\[\Prob\sparenv*{\set*{R_2(C)\leq \rho n}\cap \set*{\abs{C}< q^{n(1-H_{q^2}(\rho) +\varepsilon+1/n)}}}\geq 1-\exp\parenv*{-q^{n(2\varepsilon+o(1))}}.\]
We note that 
\begin{align*}
    &\Prob\sparenv*{\set*{R_2(C)\leq \rho n}\cap \set*{\abs{C}< q^{n(1-H_{q^2}(\rho) +\varepsilon+1/n)}}}\\
    &\qquad \leq \Prob\sparenv*{\set*{R_2(C)\leq \rho n}\cap \set*{\abs{C}\leq M(n,\rho,\varepsilon)}}\\
    &\qquad = \sum_{m=0}^{M(n,\rho,\varepsilon)}\Prob\sparenv*{\set*{R_2(C)\leq \rho n}\cap \set*{\abs{C}=m}}\\
    &\qquad =  \sum_{m=0}^{M(n,\rho,\varepsilon)}\Prob\sparenv*{R_2(C)\leq \rho n \bigg| \abs{C}=m}\Prob\sparenv*{\abs{C}=m}.
\end{align*}

We recall that $C$ is generated randomly by picking any vector in $G_q^{n}$ with the same probability, independently of the remaining vectors in $ G_q^n$. This implies that the probability of any code is determined by its cardinality. Thus, under the conditional measure on the event $\set{\abs{C}=m}$, $C$ is uniformly distributed on the set of $(n,m)_q$ codes, and in particular 
\[ \Prob\sparenv*{R_2(C)\leq \rho n \bigg| \abs{C}=m}= \alpha_q(n,\rho,m).\]

Combining the above results with the monotonicity property from Lemma~\ref{lem:FracMonotonic} we have, for all sufficiently large $n$,
\begin{align*}
    1-\exp\parenv*{-q^{n(2\varepsilon+o(1))}} &\leq  \sum_{m=0}^{M(n,\rho,\varepsilon)}\Prob\sparenv*{R_2(C)\leq \rho n \bigg| \abs{C}=m}\Prob\sparenv*{\abs{C}=m} 
    \\ &=\sum_{m=0}^{M(n,\rho,\varepsilon)}\alpha_q(n,\rho,m)\Prob\sparenv*{\abs{C}=m} \\
    &\leq \alpha_q(n,\rho,M(n,\rho,\varepsilon)) \sum_{m=0}^{M(n,\rho,\varepsilon)}\Prob\sparenv*{\abs{C}=m}\\
    &= \alpha_q(n,\rho,M(n,\rho,\varepsilon))\Prob\sparenv*{\abs{C}\leq M(n,\rho,\varepsilon)}\\
    &\leq \alpha_q(n,\rho,M(n,\rho,\varepsilon)).
\end{align*}
This completes the proof.
\qed
\end{proof}

\section{Conclusion and Further Questions}
\label{sec:conclude}

In this paper we studied the optimal rate of general second-order covering codes over finite Abelian groups. Our main result, the exact asymptotic minimal rate of second-order covering codes, was proved using a probabilistic approach. 

As we saw, the problem of finding a $t$-th-order covering code over a group $G_q$ is in fact the problem of covering the space of matrices $G_q^{t\times n}$ (equipped with the $t$-metric) with codes that are $t$-powers, i.e., codes of the form $C^t$ for some one-dimensional code $C\subseteq G_q^n$. It is easy to check that $G_q^{t\times n}$ (with the $t$-metric) is isometrically isomorphic (as a metric space) to $(G_q^t)^n$ with the regular Hamming metric.  Combining this observation with \eqref{eq:1orderSol}, we conclude that the asymptotic minimal rate for general first-order covering codes in $G_q^{t\times n}$ with normalized covering radius $\rho$ with respect to the $t$-metric is 
\[\kappa_1(\rho,q^t)=\begin{cases} 1-H_{q^t}(\rho) & \rho \in [0,1-\frac{1}{q^t}),\\
0 & \rho \in [1-\frac{1}{q^t},1].
\end{cases}\]
In the second-order case, Theorem~\ref{th:BinRatecovering} reveals that 
\[ \kappa_2(\rho,q)=\kappa_1(\rho,q^2)=\begin{cases} 1-H_{q^2}(\rho) & \rho \in [0,1-\frac{1}{q^2}),\\
0 & \rho \in [1-\frac{1}{q^2},1].
\end{cases} \] 
In theory, the proof strategy of Theorem~\ref{th:BinRatecovering} may be applied for higher values of $t$. However in practice,  the analysis performed in our proof, which is already involved in the second-order case, seem not to be scalable for higher orders. We therefore leave the higher-order problem for future study:
\begin{question}
Prove that for any $q,t\geq 2$
\[\kappa_t(\rho,q)=\kappa_1(\rho,q^t)=\begin{cases} 1-H_{q^t}(\rho) & \rho \in [0,1-\frac{1}{q^t}),\\
0 & \rho \in [1-\frac{1}{q^t},1]. \end{cases}\]
\end{question}

Another interesting direction of research involves linear codes. In the case where $G_q=\F_q$ is a finite field, \eqref{eq:1orderSol} raises the suspicion that there is no different in asymptotic minimal rate between general codes and linear codes. Thus, we suggest the following open problem as well:
\begin{question}
Prove or disprove that for any $q,t\geq 2$
\begin{equation}
    \label{eq:Q2}
    \kappa_t(\rho,q)=\kappa_t^{\mathrm{Lin}}(\rho,q).
\end{equation}
\end{question}
Except for the first-order case, it is unknown whether~\eqref{eq:Q2} is true.

\bibliographystyle{elsarticle-num}

\begin{thebibliography}{10}
\expandafter\ifx\csname url\endcsname\relax
  \def\url#1{\texttt{#1}}\fi
\expandafter\ifx\csname urlprefix\endcsname\relax\def\urlprefix{URL }\fi
\expandafter\ifx\csname href\endcsname\relax
  \def\href#1#2{#2} \def\path#1{#1}\fi

\bibitem{Cohen}
G.~Cohen, I.~Honkala, S.~Litsyn, A.~Lobstein, Covering codes, North-Holland,
  1997.

\bibitem{kamps1967football}
H.~Kamps, J.~Van~Lint, The football pool problem for 5 matches, Journal of
  Combinatorial Theory 3~(4) (1967) 315--325.

\bibitem{linderoth2009improving}
J.~Linderoth, F.~Margot, G.~Thain, Improving bounds on the football pool
  problem by integer programming and high-throughput computing, INFORMS Journal
  on Computing 21~(3) (2009) 445--457.

\bibitem{van1989new}
P.~J. van Laarhoven, E.~H. Aarts, J.~H. van Lint, L.~Wille, New upper bounds
  for the football pool problem for 6, 7, and 8 matches, Journal of
  Combinatorial Theory, Series A 52~(2) (1989) 304--312.

\bibitem{wille1987football}
L.~Wille, The football pool problem for 6 matches: a new upper bound obtained
  by simulated annealing, Journal of Combinatorial Theory, Series A 45~(2)
  (1987) 171--177.

\bibitem{ostergaard1994new}
P.~R. {\"O}sterg{\aa}rd, New upper bounds for the football pool problem for 11
  and 12 matches, Journal of Combinatorial Theory, Series A 67~(2) (1994)
  161--168.

\bibitem{hamalainen1995football}
H.~H{\"a}m{\"a}l{\"a}inen, I.~Honkala, S.~Litsyn, P.~{\"O}sterg{\aa}rd,
  Football pools—a game for mathematicians, The American mathematical monthly
  102~(7) (1995) 579--588.

\bibitem{elimelech2021generalized}
D.~Elimelech, M.~Firer, M.~Schwartz, The generalized covering radii of linear
  codes, IEEE Transactions on Information Theory 67~(12) (2021) 8070--8085.

\bibitem{1991-Wei}
V.~K. Wei, Generalized {H}amming weights for linear codes, IEEE
  Trans.~Inform.~Theory 37~(5) (1991) 1412--1418.

\bibitem{elimelech2022generalized}
D.~Elimelech, H.~Wei, M.~Schwartz, On the generalized covering radii of
  reed-muller codes, IEEE Transactions on Information Theory (2022).

\bibitem{cohen1985good}
G.~Cohen, P.~Frankl, Good coverings of {H}amming spaces with spheres, Discrete
  Mathematics 56~(2-3) (1985) 125--131.

\bibitem{GurRudSud22}
V.~Guruswami, A.~Rudra, M.~Sudan, Essential Coding Theory, 2022.

\bibitem{schwartz2011new}
M.~Schwartz, A.~Vardy, New bounds on the capacity of multidimensional
  run-length constraints, IEEE transactions on information theory 57~(7) (2011)
  4373--4382.

\bibitem{chao1972negative}
M.-T. Chao, W.~Strawderman, Negative moments of positive random variables,
  Journal of the American Statistical Association 67~(338) (1972) 429--431.

\bibitem{janson2016large}
S.~Janson, Large deviation inequalities for sums of indicator variables, arXiv
  preprint arXiv:1609.00533 (2016).

\end{thebibliography}

\end{document}